\documentclass[11pt]{article}

\usepackage[utf8]{inputenc}
\usepackage[T1]{fontenc}
\usepackage{microtype}

\usepackage{geometry}
\usepackage{amsmath, amssymb, array, bm, mathrsfs, bbm, cancel, mhchem}

\usepackage{graphicx, float}
\usepackage[table]{xcolor}
\usepackage{colortbl, booktabs, multirow, tcolorbox, etoolbox}

\usepackage{import, enumitem, lineno, soul, todonotes}
\usepackage[normalem]{ulem}

\usepackage[acronym, toc, sort=use, nonumberlist]{glossaries}
\preto\printglossary{\glsaddall}
\makeglossaries
\usepackage{nomencl}
\makenomenclature

\usepackage{listings}
\lstdefinestyle{mathematica}{
    language=Mathematica,
    basicstyle=\small\ttfamily,
    backgroundcolor=\color{gray!10},
    frame=single,
    breaklines=true,
    commentstyle=\color{green!60!black},
    keywordstyle=\color{blue},
    stringstyle=\color{red},
    showstringspaces=false,
    tabsize=2
}

\usepackage{tikz, tikz-cd}
\usepackage{pgfplots}
\pgfplotsset{compat=1.15}
\usetikzlibrary{
    shapes, shapes.geometric, shapes.symbols, shapes.callouts,
    arrows, arrows.meta,
    positioning, calc,
    decorations.pathmorphing, decorations.markings,
    backgrounds, fit,
    patterns, graphs, automata
}
\tikzset{
    startstop/.style={rectangle, rounded corners, minimum width=3cm,
        minimum height=1cm, text centered, draw=black, fill=blue!30},
    io/.style={trapezium, trapezium left angle=70, trapezium right angle=110,
        minimum width=3cm, minimum height=1cm, text centered,
        draw=black, fill=blue!30},
    process/.style={rectangle, minimum width=3cm, minimum height=1cm,
        text centered, draw=black, fill=green!30},
    arrow/.style={thick, ->, >=stealth},
    cloud/.style={draw, ellipse, fill=red!20, node distance=0.87cm,
        minimum height=2em},
    line/.style={draw, -latex'}
}
\tikzcdset{every label/.append style = {font = \small}}

\usepackage[final, backref]{hyperref}

\usepackage{makecell}
\usepackage[novbox]{pdfsync}
\usepackage{amssymb, amsfonts,mathtools,bbm,eurosym,graphicx,latexsym, amsmath,amsthm,times,nccmath,url}
\usepackage{listings,verbatim,cases,tabularx}
\usepackage{graphicx,float,fancybox}
\usepackage{epsfig,epstopdf,tikz}
\usepackage[sans]{dsfont}
\usetikzlibrary{positioning,arrows,arrows.meta,calc}

\newtheorem{theorem}{Theorem}
\newtheorem{definition}{Definition}
\newtheorem{open}{Problem}
\newtheorem{example}{Example}

\newcounter{tabcounter}
\newcounter{defcounter}
\newcounter{opcounter}
\newcounter{thmcounter}
\newcounter{excounter}

\newglossarystyle{unifiedstyle}{
  \setglossarystyle{list}
  
}

\makeatletter
\newcommand{\beXa}[1][]{%
  \def\@temparg{#1}%
  \ifx\@temparg\@empty
    \begin{example}%
  \else
    \refstepcounter{excounter}%  % Step the counter
    \begin{example}[#1]%
    \label{e:#1}%                 % Create a label (using 'e' for example)
    \newglossaryentry{e\theexcounter}{%
      name={#1},%
      description={p.~\pageref{e:#1}},%  % Use pageref of the label
      user1={\theexcounter},%
      user2={Example}%              % Use 'Example' as the category
    }%
  \fi
}
\newcommand{\eeXa}{\end{example}}

\newcommand{\beD}[1][]{%
  \def\@temparg{#1}%
  \ifx\@temparg\@empty
    \begin{definition}%
  \else
    \refstepcounter{defcounter}%
    \begin{definition}[#1]%
    \label{d:#1}%
    \newglossaryentry{d\thedefcounter}{%
      name={#1},%
      description={p.~\pageref{d:#1}},%  % Use pageref of the label
      user1={\thedefcounter},%
      user2={Definition}%
    }%
  \fi
}
\newcommand{\eeD}{\end{definition}}

\newcommand{\beT}[1][]{%
  \def\@temparg{#1}%
  \ifx\@temparg\@empty
    \begin{theorem}%
  \else
    \refstepcounter{thmcounter}%
    \begin{theorem}[#1]%
    \label{t:#1}%
    \newglossaryentry{t\thethmcounter}{%
      name={#1},%
      description={p.~\pageref{t:#1}},%
      user1={\thethmcounter},%
      user2={Theorem}%
    }%
  \fi
}
\newcommand{\eeT}{\end{theorem}}
\newcommand{\beO}[1][]{%
  \def\@temparg{#1}%
  \ifx\@temparg\@empty
    \begin{open}%
  \else
    \refstepcounter{opcounter}%
    \begin{open}[#1]%
    \label{op:#1}%
    \newglossaryentry{o\theopcounter}{%
      name={#1},%
      description={p.~\pageref{op:#1}},%  % Use pageref of the label
      user1={\theopcounter},%
      user2={Problem}%
    }%
  \fi
}
\newcommand{\eeO}{\end{open}}

\newcommand{\betA}[1][]{%
  \def\@temparg{#1}%
  \ifx\@temparg\@empty
    \begin{table}[htbp]%
  \else
    \refstepcounter{tabcounter}%
    \begin{table}[htbp]%

    \label{A:\thetabcounter}%

    \newglossaryentry{A\thetabcounter}{%
      name={#1},% % Uses the Name for the glossary title
      description={p.~\pageref{A:\thetabcounter}},%
      user1={\thetabcounter},%
      user2={Table}%
    }%
  \fi
}

\newcommand{\eetA}{\end{table}}
\makeatother
\newcommand{\printboth}{%
  \glsaddall  % Add all entries to glossary
  \printglossary[style=unifiedstyle, title={Definitions, Theorems, Examples and  Problems}]%
}

\newtheorem{lemma}{Lemma}
\newtheorem{proposition}{Proposition}
\newtheorem{corollary}{Corollary}

\newtheorem{remark}{Remark}
\newtheorem{question}{Question}
\newtheorem{assumption}{Assumption}

\def\beL{\begin{lemma}}\def\eeL{\end{lemma}}
\def\beP{\begin{proposition}}\def\eeP{\end{proposition}}
\def\beC{\begin{corollary}}\def\eeC{\end{corollary}}
\def\beR{\begin{remark}}\def\eeR{\end{remark}}
\def\beQ{\begin{question}}\def\eeQ{\end{question}}
\def\beA{\begin{assumption}}\def\eeA{\end{assumption}}

\definecolor{funccolor}{RGB}{25,25,112}
\definecolor{desccolor}{RGB}{64,64,64}

\def\bep{\begin{pmatrix}}\def\eep{\end{pmatrix}}
\def\bev{\begin{vmatrix}}\def\eev{\end{vmatrix}}
\def\bea{\begin{eqnarray*}}\def\eea{\end{eqnarray*}}
\def\bc{\begin{cases}}\def\ec{\end{cases}}
\def\BEN{\begin{enumerate}}\def\EEN{\end{enumerate}}
\def\BI{\begin{itemize}}\def\EI{\end{itemize}}

\newcommand{\be}[1]{\begin{equation}\label{#1}}
\newcommand{\ee}{\end{equation}}
\newcommand{\beq}{\begin{eqnarray}}
\def\eeq{\end{eqnarray}}
\def\eqr{\eqref}\def\fr{\frac}\def\lbl{\label}
\def\Lra{\Longrightarrow}\def\cNGM{\cite{Diek,Van,Van08}}
\def\Eq{\Leftrightarrow}\def\b{\beta}
\newcommand{\R}{\mathbb{R}}  % Real numbers

\def\al{\alpha}

\def\f{\varphi}  % Kept as requested
\def\ga{\gamma}
\def\La{\Lambda}

\def\T{\widetilde}
\def\H{\hat}

\newcommand{\bff}[1]{{\mbox{\boldmath$#1$}}}

\def\x{\boldsymbol{x}}
\newcommand\y{\boldsymbol{y}}

\def\v1{\vec {\bff 1}}

\def\mR{{\mathcal R}}

\long\def\symbolfootnote[#1]#2{%
\begingroup
\def\thefootnote{\fnsymbol{footnote}}\footnote[#1]{#2}%
\endgroup}

\def\xd{x_{0}}
\def\and{antisymmetric}

\def\Fr{Furthermore, }

\def\ie{i.e. }
\def\im{\item}

\def\nne{non-negative}

\def\para{parameter}

\def\Prf{{\bf  Proof:}}

\def\resp{respectively}
\def\sats{satisfies}\def\saty{satisfy}
\def\sec{\section}\def\ssec{\subsection}

\def\var{variable}

\def\wrt{with respect to}

\newcommand\CRN{chemical reaction networks}

\def\brn{basic reproduction number}
\def\com{compartment}

\def\ME{mathematical epidemiology}

\def\NGM{next generation matrix}

\def\repF{reproduction functions}

\def\fp{fixed point}\def\bfp{boundary fixed point}

\def\RH{Routh-Hurwitz}

\newcommand\BIN{biological interaction network}

\def\LV{Lotka-Volterra}

\def\regS{{\bf regular splitting}}
\def\RUR{rational univariate representation}

 \def\rat{rational substitution}
 \def\RHS{right hand side} \def\LHS{left hand side}
 
\def\wli{$\omega$-limit}
\def\b{\beta}

\newtcolorbox{keyresult}[1][]{
    colback=blue!5!white, colframe=blue!75!black,
    title=#1, fonttitle=\bfseries
}
\newtcolorbox{examplebox}[1][]{
    colback=green!5!white, colframe=green!75!black,
    title=#1, fonttitle=\bfseries
}
\newtcolorbox{proofbox}[1][]{
    colback=yellow!5!white, colframe=orange!75!black,
    title=#1, fonttitle=\bfseries
}

\def\b0{\bm{0}}

\title{
Relay transitions and invasion thresholds in multi-strain rumor models: a chemical reaction network approach
}

\author{Florin Avram$^{1}$ , Rim Adenane$^{2}$,
,Andrei-Dan Halanay$^{3}$
}
\begin{document}

\maketitle
\begin{center}

$^{1}$ Laboratoire de Math\'{e}matiques Appliqu\'{e}es, Universit\'{e} de Pau, Pau, France; avramf3@gmail.com \\
$^{2}$ Laboratoire d'Analyse, Géométrie et Applications, Département des Mathématiques,  Universit\'e Ibn-Tofail, K\'enitra, 14000, Morocco \\
$^{3}$ Department of Mathematics, Bucharest University, Bucharest, Romania

\end{center}

%\iffalse
\begin{abstract}
The historical quest for unifying the concepts and methods of \CRN\ theory
(CRNT), \ME\ (ME) and ecology has received increased attention in the last years
and has led in particular to the development of the symbolic package EpidCRN, for
automatic analysis of positive ODEs, which implements tools from all these disciplines
like siphons, reproduction functions and invasion numbers, Child-Selection expansions, etc.

We illustrate below the convenience of using this package on some recent online social
network (OSN) rumor spreading models, with emphasis on showing how CRNT throws a new
light on their analysis.
Specifically, we organise the boundary dynamics via the lattice of invariant faces
generated by minimal siphons, and establish that stability transitions take the form of
\emph{relays}: for each distance-one cover in the siphon lattice, a single invasion
inequality simultaneously governs the loss of transversal stability of the resident
equilibrium and the existence of a successor equilibrium on the adjacent face.

For the base OSN model ($\omega=0$) all boundary and interior equilibria admit explicit
rational formulas, and the relay table is fully verified using invasion numbers computed
symbolically by EpidCRN.
For the variant with waning spreading impulse ($\omega>0$), the relay structure is
analysed via transversal Jacobian blocks; three equilibria involve irrational coordinates
and their stability is predicted by the relay framework subject to direct Routh--Hurwitz
verification.
The relay mechanism is then situated in its normal-form context (siphon-induced
transcritical bifurcations), distinguished from classical transcritical bifurcations along
four structural axes, and compared with Hofbauer invasion graphs.
\end{abstract}

%\fi
\textbf{Keywords:}    biochemical interaction  networks; essentially nonnegative/positive systems; chemical reaction networks; mathematical epidemiology; disease free equilibrium; regular splitting; \RH\ stability conditions; multi-strain models;   metapopulation models; reproduction functions; admissible communities; invasion numbers;   \CRN; stoichiometric matrix;
  siphons/semi-locking sets;  symbolic Jacobian; rich kinetics;
    Blokhuis-Stadler-Vassena oscillation recipes
   %bifurcations;
   %global asymptotic stability; %stochastic reaction networks; continuous time Markov chains; multi-scale limits; Lyapunov functions %algebraic biology graph theory;  robust control;  optimization approach\\ %, toric steady states, Lotka-Volterra canonical form,

\tableofcontents
\printboth
%
%\newpage
\section{Introduction}
Population dynamics,  ecology,  mathematical epidemiology (ME), virology, rumour spreading, social networks, chemical reaction networks theory (CRNT), are a few of the biological 
interaction networks (BIN) subfields, which study all positive dynamical systems, and have similar preoccupations: the existence and multiplicity of equilibria, their local and
global stability, the occurrence of bifurcations,   persistence, permanence, extinction, etc.

\beD[Positive / non-negative ODEs]
A dynamical system is called \emph{positive} \cite{rantzer2015scalable} or \emph{non-negative} \cite{haddad2010}
if the non-negative orthant
\[
\mathbb{R}^n_{\ge 0} := \{ x \in \mathbb{R}^n : x_i \ge 0,\; i = 1,\dots,n \}
\]
is forward invariant under the flow.
\eeD

\subsection{Motivation}

Positive ODE models arising in chemical reaction networks (CRN), mathematical
epidemiology (ME), ecology, virology,  rumor spreading, etc share a common geometric feature:
their dynamics is constrained by a family of forward--invariant coordinate faces.
 In ME these faces  appear as disease--free or
partially disease--free manifolds, and in  CRN theory they are characterized combinatorially as semi-locking sets, also called \emph{siphons}  in the theory of Petri nets.
 Despite their ubiquity, the systematic role
of invariant faces beyond the disease--free equilibrium (DFE) has remained
fragmented across fields.

The classical next--generation matrix (NGM) theorem
\cNGM, probably the most cited result about positive
ODEs, but  largely unknown outside  (ME),  is emblematic of this fragmentation.
It provides a  striking invasion criterion of the form $R_0 >1$ at the DFE ($R_0$ is called  reproduction number), which is motivated by the theory of population models  
\cite{Lotka} and their branching process  approximations \cite{kendall1966branching}. This result implicitly relies on
three distinct mechanisms that are unfortunately rarely disentangled:

(i) the fact that forward invariance of a boundary face  enforces a {\bf block structure of
the Jacobian  evaluated on it} \cite{AAH26};

(ii) the ensuing {\bf reduction of  stability to that of the  lower--dimensional tangential and transversal blocks} \cite{JA}    ;

(iii) {\bf Metzler structure and regular splittings}. A recurring feature of the models considered in ME is that transversal Jacobian
blocks associated with siphon faces are Metzler matrices.
For such matrices, the equivalence between the sign of the spectral abscissa and
threshold conditions expressed via reproduction numbers goes
back to the theory of regular splittings and non-singular $M$--matrices
\cite{Varga,BermanPlemmons1994}, and
the next--generation matrix construction used in mathematical epidemiology is
just one particular realization of this general principle. At first,
this classic result suggests preferring spectral abscissa, which are unique,
while reproduction numbers aren't. However, in this paper we  use the latter, both due to empirical observations that reproduction numbers are sometimes simpler, and to the probabilistic interpretations which might turn out useful in future works.

A second illustration of the fragmentation of the positive ODE field is the rather rare appearances in \CRN\  theory (CRNT), also known as biological/biochemical interaction networks (BIN) theory,  of the concept of transversal spectral abscissas, which are fundamental in studying multiple boundaries in ecological models  (under the name of  invasion numbers). It seems fair to say that this is caused by CRNT focusing mainly on the interior, while ME focuses mainly on the boundary. The road between the two
are the ``escape from the boundary paths" via positive invasion numbers, originating in  ecology works like
\cite{SchusSig,SigSchus} (also not enough known outside ecology), and  this
context  is the closest to our paper below.

  This paper started as an  investigation of  the escape from the boundary paths of   some multi strain models, an ME topic which may be traced back at least to \cite{ferguson1999effect,schwartz2005chaotic,nuno2005dynamics}, which includes rumor models on online social networks (OSN).  In the process,
  we discovered a striking empirical fact, visible in many concrete models,  that the
inequality governing transversal instability of a resident equilibrium
coincides exactly with the inequality guaranteeing existence of a successor
equilibrium on a strictly larger face. We refer to this phenomenon as a
\emph{boundary transcritical relay}.  While this is reminiscent of transcritical bifurcations,
one aspect is   not included in classical works like \cite{Boldin}, which typically study invasions by one species only.

In positive ODE models however,  equilibria are naturally organized by the lattice induced  by \emph{minimal siphons}, \ie by the
boundary faces generated by unions of minimal siphons, and
 equilibria are invaded by entire \emph{blocks}
of variables corresponding to a minimal siphon, rather than by a single
direction. The relevant invasion object is therefore not a scalar reproduction
number, but the spectral abscissa (or reproduction
number) of a \emph{transversal Jacobian block} associated
with the invading siphon.  \Fr the various reproduction
numbers   in multi-strain models are related between them, being given by plugging equilibrium points in the reproduction functions introduced  in \cite{Blalog,AABH25}.

\paragraph{Reproduction functions as organizing objects.}
One central object of this paper are the  \emph{reproduction functions} defined on siphon faces,  formalized in Definition~\ref{d:reF}.
For simple ME-type siphons, these functions arise as the positive eigenvalues of
transversal Metzler blocks with resident variables left free.
Evaluating the same reproduction function at different resident equilibria
yields, respectively, basic reproduction numbers and invasion numbers.
This viewpoint allows one to  explain why
the same inequality appears both as an existence condition for an invading
equilibrium and as a stability condition for the resident one, and to compare the invasion numbers of the  equilibria across faces, and  therefore partition the parameter space, and  compute bifurcation boundaries.

\paragraph{Relation to invasion graphs in ecology.}
The relay graphs introduced in this paper are closely related in spirit to the
invasion graphs of Hofbauer and collaborators, which encode which communities
can invade which others via positive invasion rates.
There are, however, two structural differences.
First, our nodes are not arbitrary equilibria but equilibria indexed by
\emph{inhabited siphon faces}, ordered by inclusion in the lattice generated by
minimal siphons.
Second, edges in our relay graph are not abstract invasion relations but are
labeled by explicit invasion numbers obtained as evaluations of reproduction
functions.
In particular, adjacency in the relay graph corresponds to distance--one moves
in the siphon lattice, which allows an algorithmic enumeration of equilibria
and a direct correspondence between instability of a resident equilibrium and
existence of its successor.

The first goal of this paper is to further explain, formalize, and algorithmize the boundary transcritical relay (BTR)
mechanism.
We claim  that the coincidence between instability of a resident
equilibrium and existence of an invading equilibrium is not accidental, nor a
model--specific algebraic trick. Rather, it is a structural consequence of:
(a) face invariance induced by siphons;
(b) factorization of the vector field in the transversal variables;
(c) Metzler structure of the corresponding Jacobian blocks.
When these conditions hold, invasion by a minimal siphon necessarily produces a
boundary transcritical bifurcation on the invariant face, and the relay follows.

To render this structure visible, we advocate a shift of viewpoint:
from individual equilibria to the \emph{lattice of inhabited siphon faces}.
Within this lattice, equilibria are naturally ordered, and the global boundary
dynamics is organized as a directed relay graph whose edges are labeled by
invasion numbers.
This perspective unifies tools from CRN theory (siphons, semilocking sets),
ME (reproduction numbers  and functions), and ecology (escape from the boundary paths,invasion graphs)
and it leads to concrete algorithms for enumerating equilibria, testing their
stability, and predicting which equilibria replace which others as parameters
vary.

The OSN rumor model studied here provides a nontrivial but tractable test case.
It exhibits multiple minimal siphons, non-rational equilibria, and competing
relay chains.
By analyzing first the case $\omega=0$, where the relay structure is completely
transparent, and then extending the analysis to $\omega>0$ %via perturbation and
%rank--one feedback arguments,
we show how the relay mechanism persists even when
explicit equilibrium formulas are unavailable.
\iffalse
Our quest to understand NGM and invasions led to the development of the symbolic package  EpidCRN, for  automatic analysis of positive ODEs,  which implements  several CRNT, ecology  and ME tools (see below). %like siphons, boundary searches for equilibria, reproduction functions,  Child-Selection  expansions, etc.

We  argue below that a unification of BIN subfields is profitable both from pedagogical and computational points of view.
\BEN \im
Pedagogically,
 since several crucial BIN facts which we call sometimes ``laws", %in the apendix,
 that have emerged in different subfields of BIN, and algorithms, are both not well-enough known outside the field
where they originated, and not enough understood.

\im Computationally, we    illustrate  how different flavor examples  may be   studied
in a unified and off-shelf manner, just by entering the ODE, and then calling the \texttt{Epid-CRN} Mathematica package at \url{https://github.com/florinav/EpidCRNmodels}. This contains  modules to identify minimal siphons, rational quasi-steady state solutions (QSS),  rational boundary fixed points, F,V regular splitting matrices, the \NGM\ matrix K, etc,  integrating thus
methods for positive ODEs that have emerged in different subfields like \ME\ (ME), \CRN\ (CRN) and ecology.
\EEN
\fi
\subsection{Structure of the paper}

The paper is organized around the idea that the global boundary dynamics of a
positive ODE model is governed by the lattice of invariant faces generated by
minimal siphons, and that stability transitions occur as relays between equilibria
living on adjacent faces.

Section~\ref{s:OSN} introduces the formal framework, and exemplifies it
via the OSN model  of Fakih, Halanay  and Avram  \cite{HA}.  We also illustrate how EpidCRN facilitates its analysis, by interpreting  it as a chemical reaction network, which
identifies its minimal siphons and the lattice they generate.
Using the fact that face invariance enforces a block structure of the Jacobian
\cite{AAH26},  we
introduce the notion of a transversal Jacobian block associated with a siphon.
This section provides the structural objects---faces, blocks, and invasion numbers---
that are used throughout the paper.

Section~3 analyzes in detail the case $\omega=0$, where all equilibria admit explicit
formulas.
This section constitutes the conceptual core of the paper.
We enumerate all face equilibria, organize them along the siphon lattice, and
establish the relay table linking existence and stability.
The main result shows that, for each distance--one step in the lattice, the same
invasion inequality governs both the loss of transversal stability of a resident
equilibrium and the emergence of its successor on a less constrained face.
This makes the relay mechanism fully transparent and algorithmic.

Section~\ref{s:alg} extends the relay viewpoint in two directions.
First, it formulates an algorithmic test for relays along distance--one covers in the
siphon lattice: given a cover pair $(T,S)$ the algorithm checks (i) existence and
tangential stability of a face equilibrium $E_S$, (ii) transversal instability via the
Metzler block $M_\sigma(E_S)$, and (iii) existence and stability of the successor $E_T$;
termination is guaranteed whenever all intermediate characteristic polynomials are
rational in the parameters.
Second, the section situates transcritical relays in their normal-form context,
identifies four structural differences from classical transcritical bifurcations
(geometric origin of the critical eigenvalue, block invasion rather than single direction,
lattice-prescribed successor, shared inequality for instability and successor existence),
and compares the relay-graph framework with Hofbauer invasion graphs.

Section~\ref{s:omp} provides the application of the relay algorithm to the case $\omega>0$.

Section~\ref{s:Hopf}  provides an auxiliary structural result, namely the impossibility of oscillatory
behavior for the case $\omega=0$,
which clarifies the dynamical landscape, but is not required for the main
relay analysis.

Section~\ref{s:R1}  discusses perturbative extensions to the case
$\omega>0$, where explicit equilibria are no longer available.
Here we show how rank--one feedback and resolvent estimates can be used to control
stability of candidate relay equilibria identified by the siphon analysis.
This section is exploratory in nature and is included to indicate how the relay
framework can be combined with perturbation theory in more general settings.

Appendix~\ref{s:NGM} includes, for convenience of the reader,  our generalization of
 the NGM theorem from \cite{AAH26}.   

 Finally, Appendix~\ref{s:wl} recalls, for convenience of the reader, 
 two crucial  results of
 CRNT: the equivalence between invariant boundary faces and siphons, and the fact
 that boundary $\omega$-limit point $E$ must reside on (critical)  siphon faces~
\cite{AdLS}, which provide the foundation of our approach.

Sections~3--6 can be read as a progression of decreasing explicit information:
rational equilibria (Section~3), algorithmically detected relays for general positive
ODEs (Section~\ref{s:alg}), implicit equilibria with relay predictions intact
(Section~\ref{s:omp}), and perturbative stability control when neither equilibria nor
their stability are explicit (Section~\ref{s:R1}).
%Throughout, the relay mechanism itself---organized by the siphon lattice and invasion numbers---remains unchanged.

\iffalse

We conclude with a discussion of implications for multi--strain epidemic models,
rumor spreading on networks, and CRN theory, emphasizing how invasion numbers at
non--DFE equilibria naturally organize the global boundary dynamics.

  Section \ref{s:CS} reviews the main ideas of  the Vassena-Stadler bifurcation analysis of symbolic Jacobians via Cauchy--Binet Child Selections,  and illustrates it on  the SIRWS ME model.

   In section \ref{s:BR}, we review and comment  the recent idea of \cite{BorRost}  of
   proving symbolically the presence of Hopf bifurcations for complicated ME models by examining reduced subnetworks and by applying ``inheritance rules" \cite{Banaji,BBH25}.

 Section \ref{s:CRW} reviews results of \cite{VAA}
  on a general  ``Capasso-Ruan-Wang" ``\spf" SIRS-type epidemic model with nonlinear forces of infection and treatment \eqr{Vyss} (which generalizes   Zhou-Fan).
\fi

\sec{A  model of propagation of rumors in  online social networks (OSN)   of Fakih, Halanay  and Avram  \cite{HA}, with 8 species  13 \para s, and 6 equilibria: OSN.wl
}
\lbl{s:OSN}

This model is brought here because it exhibits simultaneously several features common to positive ODE
models: multiple minimal siphons, their lattice, non-rational equilibria, non mass-action kinetics and competing invasion
paths. In particular, it lead us to develop a script clsEq, which, using  minimal siphons and an associated  polynomial complementarity principle, determines   the equilibria, and   classifies them into A) rational, B) admitting a \RUR\ (RUR), or C) undecidable under given time constraints;
It     also lead us to introduce  fundamental  concepts like ME-type siphon-faces, defined by  the Jacobian $J_x(\y)$  being Metzler, and    admitting some regular splitting  $J_x=F-V$  --see below.

The similarity between viruses and rumours is well known, since the pioneering paper \cite{DalKen}. For further work
the  propagation of rumors in  online social networks (OSN),  see \cite{amaral2018rumor,zehmakan2019fake,li2021stochastic,ghosh2023comparative,
wenkai2021taming,
WangOSN,zhou2023stability,yin2024dynamics,HA,FH}.

The model studied in \cite{HA}, defined by the ODE
\be{AOSN}X'=\bep x_1\\ U\\S_1\\S_2\\ B_1\\B_2\\R\\ W\eep'=\left(
\begin{array}{c}
 \La -\mu x_1-\beta x_1 U \\
 -U \left(\mu -\beta  x_1+\beta_w W\right) \\
 \frac{\beta _1 B_1 U}{B_1 \epsilon _1+\alpha _1 U+1}-\gamma _1 S_1 \\
 \frac{\beta _2 B_2 U}{B_2 \epsilon _2+\alpha _2 U+1}-\gamma _2 S_2 \\
 \gamma _1 S_1-B_1 \mu _1 \\
 \gamma _2 S_2-B_2 \mu _2 \\
 B_1 \mu _1+B_2 \mu _2-\omega R\\
 \beta_w U W-\mu  W+R\omega
\end{array}
\right),\ee
contains mostly standard reactions, with  the exception of the two inflows with (increasing) fractional rates.

 The model tracks  8 state variables:
\begin{itemize}
\item $x_1=P$: are  \textbf{potential users of OSN}, who could join it, but have not yet done so.
\item $U$: \textbf{New users of OSN},  not yet aware of the rumors. This is a \LV\ species.

\item $S_1$: \textbf{Users susceptible for rumor 1} (they heard rumor 1 and are thinking about it).
\item $S_2$: \textbf{Users susceptible for rumor 2}.
\item $B_1$: \textbf{Believers and spreaders of rumor 1} (those actively spreading or convinced by rumor 1).
\item $B_2$: \textbf{Believers and spreaders of rumor 2}.
\item $R$: \textbf{Users who are now skeptic of both rumors} (OSN users who believed, but do not believe any more either rumor).
    \item $W$: \textbf{Ex/withdrawn -users} of OSN, who have abandoned
the platform, and may moreover convince other users to abandon.
\end{itemize}

\begin{figure}[H]
  \centering
  \begin{tikzpicture}[
      scale=0.95, transform shape,
      node distance=1.8cm and 2cm,
      state/.style={circle, draw, minimum size=1cm, font=\large},
      compartment/.style={rectangle, draw, minimum width=1cm, minimum height=0.7cm, font=\normalsize},
      arrow/.style={-Stealth, thick},
      label/.style={font=\scriptsize, align=center}
  ]

  % Main horizontal flow: x1 -> U -> x3
  \node[state] (x1) {$x_1$};
  \node[state, right=2.5cm of x1] (U) {$U$};
  \node[state, right=3cm of U] (x3) {$W$};

  % R node precisely below U
  \node[compartment, below=3.5cm of U] (R) {$R$};

  % Left branch pointing DOWN: S1 -> B1
  \node[compartment, left=1.5cm of R] (B1) {$B_1$};
  \node[compartment, above=1.2cm of B1] (S1) {$S_1$};

  % Right branch pointing DOWN: S2 -> B2
  \node[compartment, right=1.5cm of R] (B2) {$B_2$};
  \node[compartment, above=1.2cm of B2] (S2) {$S_2$};

  % Influx to x1
  \node[left=0.8cm of x1] (source) {};
  \draw[arrow] (source) -- node[above, label] {$\Lambda$} (x1);

  % x1 -> U: platform adoption
  \draw[arrow] (x1) -- node[above, label] {$\beta x_1 U$} (U);

  % U -> S1: rumor 1 creation
  \draw[arrow, dashed] (U) -- (S1);

  % U -> S2: rumor 2 creation
  \draw[arrow, dashed] (U) -- (S2);

  % S1 -> B1: rumor 1 conversion (pointing down)
  \draw[arrow] (S1) -- node[left, label] {$\gamma_1 S_1$} (B1);

  % S2 -> B2: rumor 2 conversion (pointing down)
  \draw[arrow] (S2) -- node[right, label] {$\gamma_2 S_2$} (B2);

  % B1 -> R
  \draw[arrow] (B1) -- node[below, label] {$\mu_1 B_1$} (R);

  % B2 -> R
  \draw[arrow] (B2) -- node[below, label] {$\mu_2 B_2$} (R);

  % R -> x3
  \draw[arrow] (R) -- node[left, label, pos=0.4] {$\omega R$} (x3);

  % U -> x3: platform abandonment (curved)
  \draw[arrow] (U) to[bend left=35] node[above, label] {$\beta_w U W$} (x3);

   % Horizontal inflows to S1 and S2 with fractional rates
    \node[left=1.2cm of S1] (srcS1) {};
    \draw[arrow] (srcS1) -- node[above, label] {$\frac{\beta_1 B_1 U}{B_1\epsilon_1 + \alpha_1 U + 1}$} (S1);

    \node[right=1.2cm of S2] (srcS2) {};
    \draw[arrow] (srcS2) -- node[above, label] {$\frac{\beta_2 B_2 U}{B_2\epsilon_2 + \alpha_2 U + 1}$} (S2);

  % LEGEND to the right of x3
  \node[draw, fill=yellow!10, rounded corners, align=left, font=\scriptsize,
        anchor=north west] at (8.5, 0.5) {
  \begin{tabular}{@{}l@{\quad}l@{}}
  \multicolumn{2}{c}{\textbf{Reactions \& Rates}} \\[2pt]
  \hline \\[-6pt]
  $0 \to x_1$ & $\Lambda$ \\[1pt]
  $x_1 + U \to 2U$ & $\beta x_1 U$ \\[1pt]
  $U + W \to 2W$ & $\beta_w U W$ \\[1pt]
  $B_i + U \to B_i + U + S_i$ & $\frac{\beta_i B_i U}{B_i\epsilon_i + \alpha_i U + 1}$ \\[6pt]
  $S_1 \to B_1$ & $\gamma_1 S_1$ \\[1pt]
  $S_2 \to B_2$ & $\gamma_2 S_2$ \\[1pt]
  $B_1 \to R$ & $\mu_1 B_1$ \\[1pt]
  $B_2 \to R$ & $\mu_2 B_2$ \\[1pt]
  $R \to W$ & $\omega R$ \\[1pt]
  $x_1 \to 0$ & $\mu  x_1$ \\[1pt]
  $U \to 0$ & $\mu  U$ \\[1pt]
  $W \to 0$ & $\mu  W$ \\
  \end{tabular}
  };

  \end{tikzpicture}
\caption{A flow-diagram showing the compartmental modeling of  two rumors,  1 and  2, spreading among the users of a social media platform. Each of the 13  rates of the model is  associated
to an edge, but the three ``decay edges  "   for $x_1,U,W$ are omitted in the diagram.  8 of the edges   correspond to mass action  reactions (the   five lower ``conversion transitions" with linear rates, and the quadratic ``epidemic SEIR" transitions), which allow reconstructing parts of the ODE. The  two fractional, non mass-action rates are  actually flows coming from outside, catalyzed by $U$ (which was indicated by the dashed arrows). Note that if $\omega=0$, then $R$ does not affect the rest of the species, and may be dropped.}
\label{f:OSN}
  \end{figure}
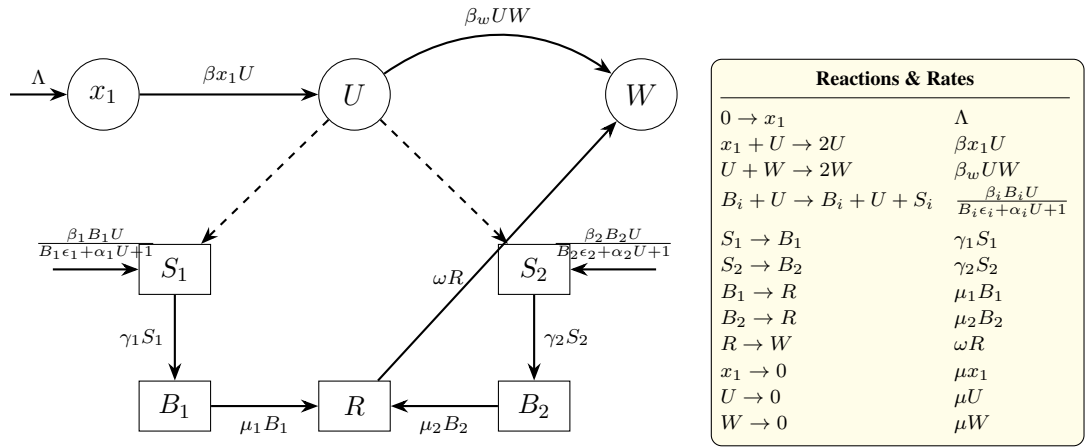

   The next sections provide  comments on the associated file OSN.wl, which illustrate that currently we have a promising automatization of stability analysis in our package. The user inputs
 a triple $$(RHS,var,prF),$$
  where the first two represent the model, and prF is a ``pretty print format" which makes the results look nice in .tex. The package converts first to standard CRN representation (RN,rts), and accomplishes then routine tasks like determining the siphons,   classifying the  fixed points as rational, admitting a \RUR\ (RUR), or timing out, etc.

\ssec{The script ODE2RNp[RHS,var,prF] gets  a ``perturbed" mass-action decomposition  (RN,rts) of the ODE
}

\beR To convert an (RHS,var) to the (RN, rts) representation customary in CRNT, we extract first the list of symbolic monomials rts which are contained in  RHS, each included only once, up to  proportionality (so, $\al x y, \al \ga x y$ will generate one rate only, $\al x y$, or $ \al \ga x y,$ depending on which appears first). At this point, we may write
$$RHS=gam.rts,$$ where $gam$ is a constant matrix. Next, we decompose
$gam=bet-alp,$ where $bet,alp$  are \nne\ matrices which will be used to define the coefficients of the \RHS\ and \LHS\ of the reactions. \eeR

\beR \lbl{r:P} \BEN \im The last 8 ``mass action linear/conversion" reactions in Figure \ref{f:OSN} require no explanation.
\im Mass action rule imposes writing the second, platform adoption  reaction,
with rate $\beta  x_1 U$   as
 $$\text{x1}+U\to 2 U,\quad \text{instead of} \quad  \text{x1}\to  U, \text{ as used in ME}.$$

 Note also the mass action writing explains the mechanism behind the phenomenon: a ``non-user"+ ``user" interaction yields two users.
 \im Similarly, the third,  abandonment  reaction, is mass action.
\im The fourth reaction in Figure \ref{f:OSN} with rate
$$f_j(U,B_j)=\frac{\beta _j B_j U}{B_j \epsilon _j+\alpha _j U+1}, j=1,2$$ is a  ``catalyzed rumor j creation"
 $$\text{Bj}+U\to \text{Bj}+U+\text{Sj}, \quad (\text{instead of  }\quad 0\to \text{Sj});$$  all the variables of the rate have been by added on both sides, since they need to appear in the LHS.
 %Now,   after neglecting  the denominator, the reaction is mass action. % (we will call rates which are polynomial divided by a denominator $1+...$ perturbed mass action rates).

 \im The transformation of $R$ into $W$ is linear, suggesting individual decisions, while the transformation of $U$ into $W$ is quadratic, ``mass action" type,  suggesting decisions based on contacts with other ex-users.

In fact, both these transformations might occur both linearly and mass-action,  suggesting more complicated models, but here we will study exactly
the \cite{HA} model, while providing some CRN perspective.

\im The division of \com s in rumor related ones $S1,S2,B1,B2,R$, involved in fast transitions, where  decay parameters have been neglected for simplicity, and population \com s $x_1, U, W$, subject to decay.

\EEN
\eeR

\beR Note that  the RN representation is not unique, and that  the search for ``good representations" is a very active field -- see for example \cite{Buxton,Hong} (whether the ``good representations" in chemistry, which enjoy weak-reversibility and low-deficiency, for example, will also be useful in ME is a topic for further research).  The main theoretical CRN concept used in this paper, the set of minimal siphons, depends only on the ODE and not on the RN representation.\eeR

\ssec{The script minSiph[var,RN] gets   the minimal siphons
}
The minimal siphons  $$\{U\},\{B1,S1\},\{B2,S2\}$$
have clear meanings (absence of OSN users, of rumor 1 and of rumor 2, \resp).

The  most relevant siphons for us are the
``DFE siphon" $\{U,B1,S1,B2,S2\}$ and the ``inhabited siphon faces"
\beD[DFE siphon, lattice of minimal siphons, inhabited siphons]\lbl{d:ins}
A) The DFE siphon is the union of the minimal siphons, and its face is the intersection of all invariant faces.

B) An inhabited siphon  is a siphon belonging to the lattice generated by the minimal ones, and  whose face contains $\omega-$limit points which are not present in larger siphons of the lattice (equivalently, they belong to the { relative interior},  if all the non-DFE variables are deleted).
\eeD

There are also 13 non minimal siphons here, but our package needs only five of them, those which complete the lattice generated by the minimal siphons. \Fr here only the rumour free (RF) siphon
$\{B1,S1,B2,S2\}$ is ``inhabited".

\beR Note that the U siphon is  caused by a variable which factors out in its equation.
\eeR
\beD[Lotka-Volterra siphons]
Siphons caused by variables which factor out in their equation will be called \LV\ siphons.
\eeD

\beR The ODE \eqr{AOSN} is interesting, since despite its fixed point system
 being  hard enough so that a brute force
Mathematica approach via Solve  does not succeed  on our computer,  it was solved by  hand in \cite{HA}, and lead to developing the script clsEq. This classifies the equilibria in rational, non rational with explicit RUR, and time-outs, during a hierarchical search of the lattice of inhabited siphons, starting from the DFE  downwards, and  using the complementarity principle well-known in the theory of \LV\ models (essentially, we replace  the LV species by  minimal siphons).

An alternative would be to search over all boundary faces, or over all siphons,
but there is reasonable hope that our ``CRN informed algorithm" performs better.
\eeR

The script clsEq finds here 5 {\bf inhabited lattice siphons} -- see Definition \ref{d:ins}.
As in all good models, they  have clear interpretations:

\BEN

\im $E_2$ (Only rumor 2), with $S_1=B_1=0$.

\im $E_1$ (Only rumor 1), with $S_2=B_2=0$.

\im $RF$ (Rumor-free), with $S_1=S_2=B_1=B_2=0$.

\im $OSND$ (OSN disappearance)  with $S_1=S_2=B_1=B_2=U=0$.

\im $EE$ (endemic equilibria), with all species positive.

\EEN

We will find that while the OSND, and two fixed points found on RF, gOSN and RFE are rational,
 E1, E2  are not, but that quadratic polynomial \RUR s (RUR) are available, in the ``keep variable" $x_1$ (or  $U$), and
 the EE (endemic equilibrium with all species positive) also \sats\ a  quadratic polynomial equation in $x_1$.
 The nonrationality complicates the analysis, and motivated us to start with the case
 $\omega=0 $  in Section \ref{s:om}.% but in current EpidCRN version, this times out}.
 
 Next section illustrates how the siphon structure reflects in lower triangular structures of the Jacobian, cf.  section \ref{s:NGM}.
\subsection{The symbolic Jacobian and its block structure}\lbl{s:Jac}

To illustrate the block structure,  we chose an order 
$(S_1,B_1,S_2,B_2,U,R,W,x_1)$  which starts with the DFE \var s. The symbolic Jacobian is sparse, without block structure (but is  a ``rank-one perturbation"  of a lower  triangular matrix -- see Section \ref{s:R1}):

\be{sJ}sJ=\left(
\begin{array}{cc|cc|c||ccc}
 -\gamma _1 & f_{1,B} & 0 & 0 & f_{1,U} & 0 & 0 & 0 \\
 \gamma _1 & -\mu _1 & 0 & 0 & 0 & 0 & 0 & 0 \\
 \hline 
 0 & 0 & -\gamma _2 & f_{2,B} & f_{2,U} & 0 & 0 & 0 \\
 0 & 0 & \gamma _2 & -\mu _2 & 0 & 0 & 0 & 0 \\
 \hline
 0 & 0 & 0 & 0 & -\mu _{\text{n}}-W \beta _w+\beta  x_1 & 0 & -U \beta _w & \beta  U \\
 \hline
 \hline
 0 & \mu _1 & 0 & \mu _2 & 0 & -\omega  & 0 & 0 \\
 0 & 0 & 0 & 0 & W \beta _w & \omega  & U \beta _w-\mu  & 0 \\
 0 & 0 & 0 & 0 & -\beta  x_1 & 0 & 0 & -\mu -\beta  U \\
\end{array}
\right)\ee

When $U=0$, the only resident variable is $x_1$, and sJ becomes
\bea sJU=\left(
\begin{array}{cccccccc}
 -\gamma _1 & 0 & 0 & 0 & 0 & 0 & 0 & 0 \\
 \gamma _1 & -\mu _1 & 0 & 0 & 0 & 0 & 0 & 0 \\
 0 & 0 & -\gamma _2 & 0 & 0 & 0 & 0 & 0 \\
 0 & 0 & \gamma _2 & -\mu _2 & 0 & 0 & 0 & 0 \\
 0 & 0 & 0 & 0 & \beta  x_1-\mu _{\text{n}} & 0 & 0 & 0 \\
 0 & \mu _1 & 0 & \mu _2 & 0 & -\omega  & 0 & 0 \\
 0 & 0 & 0 & 0 & 0 & \omega  & -\mu  & 0 \\
 0 & 0 & 0 & 0 & -\beta  x_1 & 0 & 0 & -\mu  \\
\end{array}
\right),\eea
so that when  $   x_1>\fr{\mu _{n}}{\beta}:=\H x_1$, instability holds.

In the interior of the RFE siphon, where  $S_1=B_1=S_2=B_2=0, U>0$, sJ becomes
\bea sJ=\left(
\begin{array}{cc|cc|cccc}
 -\gamma _1 & f_{1,B} & 0 & 0 & 0 & 0 & 0 & 0 \\
 \gamma _1 & -\mu _1 & 0 & 0 & 0 & 0 & 0 & 0 \\
 \hline
 0 & 0 & -\gamma _2 & f_{2,B} & 0 & 0 & 0 & 0 \\
 0 & 0 & \gamma _2 & -\mu _2 & 0 & 0 & 0 & 0 \\
 \hline
 0 & 0 & 0 & 0 & -\mu _{\text{n}}-W \beta _w+\beta  x_1 & 0 & -U \beta _w & \beta  U \\
 0 & \mu _1 & 0 & \mu _2 & 0 & -\omega  & 0 & 0 \\
 0 & 0 & 0 & 0 & W \beta _w & \omega  & U \beta _w-\mu  & 0 \\
 0 & 0 & 0 & 0 & -\beta  x_1 & 0 & 0 & -\mu -\beta  U \\
\end{array}
\right)\eea
confirming the siphon structure on the RFE face, with three diagonal blocks of sizes (2,2,4); this confirms also the siphon structure on the $E_2,E_1$ faces.

\ssec{The scripts NGM[RHS, var,sip] and NGMRN[RN, rts, var] find (F,V,K), \repF\ and resident equilibria for ME-type siphons  (admitting \regS s)}

\beD [ME-type siphon]\label{d:VdD}
Let $\x$ denote the \var s which are $0$ on a siphon face S, and let  $\y$ denote the positive (resident) \var s. We will call ME-type siphon a siphon which \sats\
that the Jacobian $J_x(\y)$ on S is Metzler (equivalently, the ODE is locally  increasing in $\y$ on S), and    admits some regular splitting  $J_x=F-V$
(recall that the matrix $K=F V^{-1}$ is called NGM).
\eeD

\beR While spectral radia of NGM matrices may be non-rational, for the multi-strain models in this paper, which do not involve both hosts and vectors,   we only find rational ones. \eeR

\beD [simple ME-type siphon]\label{d:simpM}
An ME-type siphon for which the NGM  has only zero eigenvalues and rational,  unconditionally positive eigenvalues, which  were called
\repF\ in \cite{AABH25}, will be called simple.

\eeD

\beXa[DFE siphon and its reproduction function]
The maximum lattice siphon $\{U,B1,S1,B2,S2\}$ hosts  the
\textbf{online social network disappearance equilibrium} (OSND)
$$
E_0 = \left(\xd:=\fr{\La}{\mu},\,0,\,0,\,0,\,0,\,0,\,0,\,0\right).
$$

 By NGM or direct approach, we find it is stable
  when the \textbf{ reproduction function} \cite{AABJ,AABH25}
  $$ R(\y)=\frac{\beta  x_1}{\mu _n+\beta_w W}$$
  \sats:
  \be{R0r} 1> R_0:=R(E_0)=\frac{\xd }{\H x_1}, \xd:
 =\fr{\La}{\mu}, \H x_1:=\frac{\mu _n}{\beta } \quad   \text{ (see also \eqr{gOs})}
\ee
\eeXa
  The motivation for introducing reproduction functions is that they are involved in the
 computation of invasion numbers of other boundary equilibria, besides the DFE, and that
  they fit our philosophy of switching focus from the \bfp s to the faces which host them.

\beD[reproduction functions, invasion numbers, and reproduction  numbers for simple siphon faces] \label{d:reF}
\leavevmode

For a simple ME-siphon face:
%\noindent
\BEN \im the unique strictly positive eigenvalues of the  blocks, $R_i(\y), i=1, ...k$, with the $\y$ \var s left free, will be called {\bf R-reproduction functions}
(associated with the  block $i$).

\im  For any \fp\ E in this siphon face
\be{Ri} R_i(E), i=1,...,k\ee
will be called the  {\bf \brn\ of block} $i$.

\im   In the case $k=2$
with two blocks  which correspond to two sub- siphons with resident \fp s
$E_j$,
\be{invN}R_i^{\T j}:=R_i(E_j)\ee will be called
the {\bf invasion number of invading block $i$ on resident block $j$}.
%(see below for the \gene\ to multi-strain).

\EEN
\eeD

\beXa [an inhabited ME-type siphon with two reproduction functions] The RF siphon with zero set $S_1,S_2,B_1,B_2 $
has NGM
$$
\left(
\begin{array}{cccc}
 0 & 0 & 0 & 0 \\
 \frac{\beta _1 U}{\mu _1 \left(\alpha _1 U+1\right)} & \frac{\beta _1 U}{\mu _1 \left(\alpha _1 U+1\right)} & 0 & 0 \\
 0 & 0 & 0 & 0 \\
 0 & 0 & \frac{\beta _2 U}{\mu _2 \left(\alpha _2 U+1\right)} & \frac{\beta _2 U}{\mu _2 \left(\alpha _2 U+1\right)} \\
\end{array}
\right)
$$

The \repF\ are
$$R_1(U)=\frac{\beta _1 U}{\mu_1(1+\alpha _1  U)},R_2(U)=\frac{\beta _2 U}{\mu_2(1+\alpha _2  U)}.$$

The RF siphon hosts  two rational equilibria:
 \BEN \im gOSN: `` good OSN", which besides being rumor free is also  without retired ex-users, \ie\ $\{W=R=0\}$.  The non-zero coordinates are
\be{gOs} \H x_1= \frac{\mu _n}{\beta }:=\fr 1{\mR},\H U=
\frac{\mu }{\beta }(R_0-1)=\frac{\mu  }{\mu _n}(\xd-\H x_1), \H x_1<\xd.\ee

\im RFE (rumor free equilibrium OSN) whose zero coordinate set includes also $\{R=0\},$ and the non-zero coordinates are \be{RFE}\T x_1= \frac{\beta_w \Lambda }{\mu  (\beta  +\beta_w)}=\xd \frac{\beta_w }{\beta  +\beta_w},\T U= \frac{\mu }{\beta_w },
\T W= \frac{\beta  }{\beta_w}(\T x_1 -\H x_1), \H x_1<\T x_1 \Eq R_0 >1 + \fr {\beta}{\beta_w}.\ee

\EEN
\eeXa

Since the OSN model \eqr{AOSN} is pretty  challenging, we will consider in next section first the particular case $\omega=0 $, and come back to $\omega>0 $ in section \ref{s:omp}. 
\section{The case $\omega=0 $}\lbl{s:om}
When $\omega=0$ the $R$~compartment decouples from the OSN dynamics (recovered individuals no longer re-enter the susceptible pool via the withdrawal route), reducing the model to $7$~dimensions.  We have one more \LV\ siphon, W, and $9$  rational solutions.

 \noindent\scalebox{0.95}
 {$\bc
  (x_1=x_0= \frac{\Lambda }{\mu },U= 0,W= 0,S_1= 0,S_2= 0,B_1= 0,B_2= 0) &{\quad\mbox{(DFE)}}\\
  \hline
  \hline
 (\H  x_1= \frac{\mu _{\text{n}}}{\beta },\H U= \frac{\Lambda }{\mu _{\text{n}}}-\frac{\mu }{\beta },W= 0,S_1= 0,S_2= 0,B_1=
  0,B_2= 0)&{\quad\mbox{(gOSN)}}
  \\
 (\H x_1= \frac{\mu _{\text{n}}}{\beta },\H U= \frac{\Lambda }{\mu _{\text{n}}}-\frac{\mu }{\beta },W= 0,
\H S_1= \frac{\beta _1 \H U -
  \mu _1 \left(1+ \alpha _1 \H U\right)}{ \gamma _1 \epsilon _1 },S_2= 0,\H B_1= \frac{\beta _1 \H U -
  \mu _1 \left(1+ \alpha _1 \H U\right)}{\epsilon _1
  \mu _1 },B_2= 0)&{\quad\mbox{(E1g)}}
  \\
  (\H x_1= \frac{\mu _{\text{n}}}{\beta },\H U= \frac{\Lambda }{\mu _{\text{n}}}-\frac{\mu }{\beta },W= 0,S_1= 0,\H S_2= \frac{\beta _2 \H U -
  \mu _2 \left(1+ \alpha _2 \H U\right)}{\gamma _2 \epsilon _2 },B_1= 0,\H B_2= \frac{\beta _2 \H U -
  \mu _2 \left(1+ \alpha _2 \H U\right)}{\epsilon _2
  \mu _2 }) &{\quad\mbox{(E2g)}}
  \\
  (\H x_1,\H U,W= 0,\H S_1,\H S_2,\H B_1,\H B_2) &{\quad\mbox{(EEg)}}\\
  \hline
  \hline
  (\T x_1= \frac{\Lambda \beta _w}{\mu \left(\beta +\beta _w\right)},\T U= \frac{\mu }{\beta _w},\T W= \frac{\beta \Lambda }{\mu
  \left(\beta +\beta _w\right)}-\frac{\mu _{\text{n}}}{\beta _w},S_1= 0,S_2= 0,B_1= 0,B_2= 0) &{\quad\mbox{(RFE)}}
  \\
  (\T x_1= \frac{\Lambda \beta _w}{\mu \left(\beta +\beta _w\right)},\T U= \frac{\mu }{\beta _w},\T W= \frac{\beta \Lambda }{\mu
  \left(\beta +\beta _w\right)}-\frac{\mu _{\text{n}}}{\beta _w},\T S_1= \frac{R_1(\T U)-1}{\beta _w \gamma _1 \epsilon _1},S_2= 0,\T B_1= \frac{R_1(\T U)-1}{\beta _w \epsilon
   _1 \mu _1},B_2= 0)&{\quad\mbox{(E1)}}
  \\
 (\T x_1= \frac{\Lambda \beta _w}{\mu \left(\beta +\beta _w\right)},\T U= \frac{\mu }{\beta _w},\T W= \frac{\beta \Lambda }{\mu
  \left(\beta +\beta _w\right)}-\frac{\mu _{\text{n}}}{\beta _w},S_1= 0,\T S_2= \frac{R_2(\T U)-1}{\beta _w \gamma _2 \epsilon _2},B_1= 0,\T B_2= \frac{R_2(\T U)-1}{\beta _w \epsilon _2 \mu _2})&{\quad\mbox{(E2)}}
  \\
 (\T x_1,\T U,\T W,\T S_1,\T S_2,\T B_1,\T B_2)&{\quad\mbox{(EE)}}
   \ec
   $
}

  \beR The above computations which involve  15 non-zero resident values, all expressible as evaluations of  \repF, suggest  the following notation scheme.  For each maximal non-DFE inhabited siphon, we need to evaluate the
  reproduction function  at all the resident points. In this example there is only one, with two resident points, so we need to introduce
  $$R1r=R_1(\T U), R2r=R_2(\T U),R1g=R_1(\H U),R2g=R_2(\H U).$$

  The final partition of the parameter space requires ordering these 4 Rs, $R0=R_0(x_0),$ and $1$ in all the possible ways.\eeR

%%%%%%%%%%%%%%%%%%%%%%%
\ssec{Capturing global boundary behavior via the boundary transcritical relay graph when $\omega=0$}
\lbl{s:relO}

The eight boundary \fp s  naturally split into two parallel families of $4$,
according to whether the withdrawal-siphon reproduction number,
given  by the reproduction function
of the new siphon $W$, evaluated at the gOSN equilibrium value $\hat U$ of $U$,
 is
below $1$, or above:
\[
  R_0^W \;=\; \frac{\beta_w \hat{U}}{\mu} \;=\; \frac{\beta_w}{\beta}(R_0-1)<1 \Eq
R_0 <1+\frac{\beta}{\beta_w}\]

Here $\hat U = \Lambda/\mu_n - \mu/\beta = \mu(R_0-1)/\beta$ is derived from the
gOSN balance $\beta \hat x_1 = \mu_n$, $\mu_n \hat U = \Lambda - \mu\hat x_1$.
\iffalse
\begin{itemize}
  \item \textbf{DFE} (disease-free): stable iff $R_0<1$; exists always.
  \item \textbf{gOSN} (OSN-only, no strains, no withdrawal route): stable iff $R_0>1$, $R_j(\text{gOSN})<1$ for all~$j$, and $R_0^W<1$; exists iff $R_0>1$.
  \item \textbf{RFE} (withdrawal route active, no strains): stable iff $R_j(\text{RFE})<1$ for all~$j$; exists iff $R_0^W>1$.
  \item \textbf{$E_{1g}$, $E_{2g}$} (single-strain endemic, gOSN branch): $E_{jg}$ is
   stable iff $R_j(\text{gOSN})>1$, $R_{3-j}(\text{gOSN})<1$, and $R_0^W<1$; exists iff $R_j(\text{gOSN})>1$.
  \item \textbf{$E_{1}$, $E_{2}$} (single-strain endemic, RFE branch): $E_{jR}$ is stable iff $R_j(\text{RFE})>1$ and $R_{3-j}(\text{RFE})<1$; exists iff $R_j(\text{RFE})>1$.
  \item \textbf{$\text{EE}_g$} (both-strain endemic, gOSN branch): stable iff $R_j(\text{gOSN})>1$ for all~$j$ and $R_0^W<1$; exists iff $R_j(\text{gOSN})>1$ for all~$j$.
  \item \textbf{EE} (full endemic, RFE branch): stable iff $R_j(\text{RFE})>1$ for all~$j$; exists iff $R_j(\text{RFE})>1$ for all~$j$.
\end{itemize}
\fi
The LAS and existence conditions are summarized in Table~\ref{tab:OSN_equilibria_om0},
which gives for each fixed point
the variables that are zero in its coordinate representation (``invaders''),
those that are strictly positive (``residents''), existence conditions,
and extra stability conditions (excluding the existence conditions).
Relay pairs may be read from right to left and downwards.

We denote by $R_j(E), E\in\{gOSN,RFE\}$ the \repF\ of strain~$j$ evaluated at equilibrium~$E$,
and write $R_0 = R(\text{DFE})$ for the basic reproduction number at the DFE.

\begin{table}[H]
  \centering
  \small
  \caption{Equilibria of the OSN model ($\omega=0$): invaders, residents, existence and extra stability conditions.}
  \label{tab:OSN_equilibria_om0}
  \begin{tabular}{|c|c|c|c|c|}
  \hline
  \textbf{Name} & \textbf{Invaders} & \textbf{Residents} & \textbf{Existence} &
  \textbf{Extra Stability Conditions} \\
  \hline\hline
  DFE & $U,W,S_1,B_1,S_2,B_2$ & $x_1$ & always & $R_0<1$ \\
  \hline \hline
  gOSN & $W,S_1,B_1,S_2,B_2$ & $x_1,U$ & $R_0>1$ & $\max_j R_j(\text{gOSN})<1$,\ $R_0<1+\fr{\beta}{\beta_w}$ \\
  \hline
  $E_{1g}$ & $W,S_2,B_2$ & $x_1,U,S_1,B_1$ & $R_1(\text{gOSN})>1$ & $R_2(\text{gOSN})<1$,\ $R_0<1+\fr{\beta}{\beta_w}$ \\
  \hline
  $E_{2g}$ & $W,S_1,B_1$ & $x_1,U,S_2,B_2$ & $R_2(\text{gOSN})>1$ & $R_1(\text{gOSN})<1$,\ $R_0<1+\fr{\beta}{\beta_w}$ \\
  \hline
  $\text{EE}_g$ & $W$ & $x_1,U,S_1,B_1,S_2,B_2$ & $\max_j R_j(\text{gOSN})>1$ & $R_0<1+\fr{\beta}{\beta_w}$ \\
  \hline \hline
  RFE & $S_1,B_1,S_2,B_2$ & $x_1,U,W$ & $R_0>1+\fr{\beta}{\beta_w}$ & $\max_j R_j(\text{RFE})<1$ \\
  \hline
  $E_1$ & $S_2,B_2$ & $x_1,U,W,S_1,B_1$ & $R_1(\text{RFE})>1$ & $R_2(\text{RFE})<1$ \\
  \hline
  $E_2$ & $S_1,B_1$ & $x_1,U,W,S_2,B_2$ & $R_2(\text{RFE})>1$ & $R_1(\text{RFE})<1$ \\
  \hline
  EE & $\emptyset$ & $x_1,U,W,S_1,B_1,S_2,B_2$ & $\max_j R_j(\text{RFE})>1$ &
   none,  when  exists \\
  \hline
  \end{tabular}
\end{table}

\beR
Every threshold in the Extra Stability column also appears in the Existence column
in one or several  relay-successor pairs (
  $E_{1g}$,
  $E_{2g}$  and RFE have two relays each, and gOSN has three). See Figure \ref{fig:relay0}  for the induced relay graph.
The two endemic branches (g-type and R-type) are separated by the threshold
$R_0=1+\fr{\beta}{\beta_w}$: the g-branch is active when $R_0<1+\fr{\beta}{\beta_w}$
and the R-branch when $R_0>1+\fr{\beta}{\beta_w}$.

\eeR

\beD[Full and multiple boundary transcritical relay, siphon-steps ]\label{d:relay-type}
A  boundary transcritical relay  $E^*\to\tilde E$  is called \emph{full} if $E^*$ has exactly one instability
direction, whose threshold coincides with the existence threshold of $\tilde E$
(so $E^*$ is LAS if and only if $\tilde E$ does not exist).

$E^*$ is called \emph{multiple  boundary transcritical relay}
if $E^*$ has \emph{several} possible relay successors/instability directions
 $E^*\to\tilde E_i$, which will be called siphon-steps or relay-successor pairs.

\eeD

Table~\ref{tab:OSN_equilibria_om0} may be visualized via a ``relay graph":
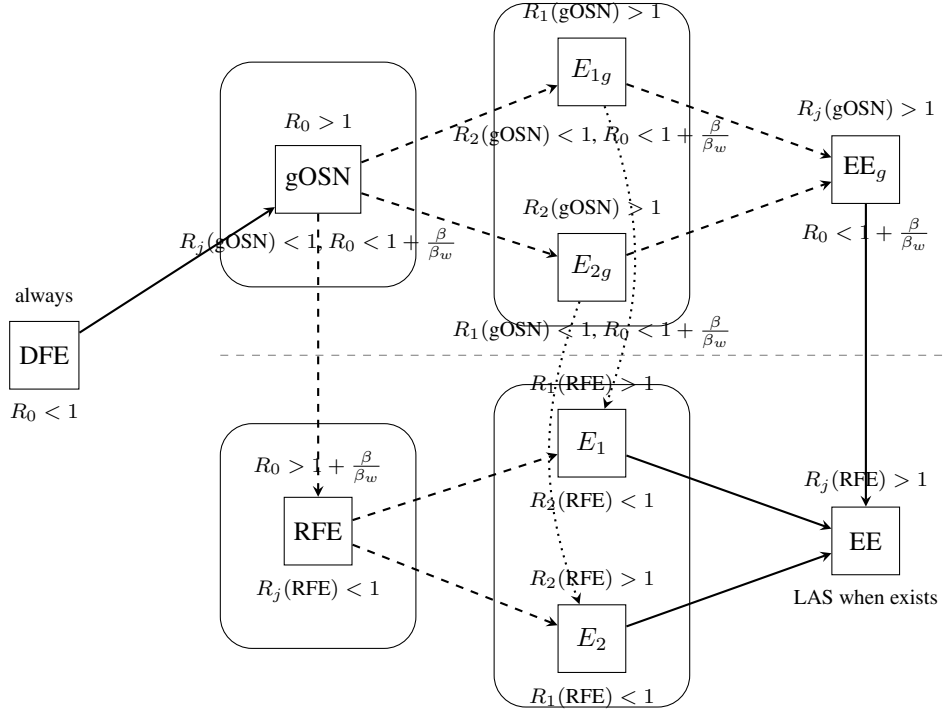
\begin{figure}[H]
\centering
\begin{tikzpicture}[scale=1.3,
    node distance=2cm,
    box/.style={rectangle, draw, minimum size=0.9cm, font=\small},
    lbl/.style={font=\scriptsize, align=center},
    arrow/.style={->, thick, >=stealth},
    darrow/.style={->, thick, >=stealth, dashed},
    dotarrow/.style={->, thick, >=stealth, dotted}
]
% Nodes
\node[box] (DFE)  at (0,    0)    {DFE};
\node[box] (gOSN) at (2.8,  1.8)  {gOSN};
\node[box] (RFE)  at (2.8, -1.8)  {RFE};
\node[box] (E1g)  at (5.6,  2.9)  {$E_{1g}$};
\node[box] (E2g)  at (5.6,  0.9)  {$E_{2g}$};
\node[box] (EEg)  at (8.4,  1.9)  {$\text{EE}_g$};
\node[box] (E1)   at (5.6, -0.9)  {$E_{1}$};
\node[box] (E2)   at (5.6, -2.9)  {$E_{2}$};
\node[box] (EE)   at (8.4, -1.9)  {EE};

% Grouping boxes
\draw[rounded corners=12pt] (1.8, 0.7) rectangle (3.8, 3.0);   % gOSN group
\draw[rounded corners=12pt] (1.8,-3.0) rectangle (3.8,-0.7);   % RFE group
\draw[rounded corners=12pt] (4.6,  0.3) rectangle (6.6, 3.6);  % Ejg group
\draw[rounded corners=12pt] (4.6, -3.6) rectangle (6.6,-0.3);  % EjR group

% Separator between gOSN and RFE (horizontal axis)
\draw[gray, dashed] (1.8,0) -- (9.2,0);

% Existence labels (above nodes)
\node[lbl, above=0.05cm of DFE]  {always};
\node[lbl, above=0.05cm of gOSN] {$R_0>1$};
\node[lbl, above=0.05cm of RFE]  {$R_0>1+\fr{\beta}{\beta_w}$};
\node[lbl, above=0.05cm of E1g]  {$R_1(\text{gOSN})>1$};
\node[lbl, above=0.05cm of E2g]  {$R_2(\text{gOSN})>1$};
\node[lbl, above=0.05cm of EEg]  {$R_j(\text{gOSN})>1$};
\node[lbl, above=0.05cm of E1]  {$R_1(\text{RFE})>1$};
\node[lbl, above=0.05cm of E2]  {$R_2(\text{RFE})>1$};
\node[lbl, above=0.05cm of EE]   {$R_j(\text{RFE})>1$};

% Stability labels (below nodes)
\node[lbl, below=0.05cm of DFE]  {$R_0<1$};
\node[lbl, below=0.05cm of gOSN] {$R_j(\text{gOSN})<1$,\ $R_0<1+\fr{\beta}{\beta_w}$};
\node[lbl, below=0.05cm of RFE]  {$R_j(\text{RFE})<1$};
\node[lbl, below=0.05cm of E1g]  {$R_2(\text{gOSN})<1$,\ $R_0<1+\fr{\beta}{\beta_w}$};
\node[lbl, below=0.05cm of E2g]  {$R_1(\text{gOSN})<1$,\ $R_0<1+\fr{\beta}{\beta_w}$};
\node[lbl, below=0.05cm of EEg]  {$R_0<1+\fr{\beta}{\beta_w}$};
\node[lbl, below=0.05cm of E1]  {$R_2(\text{RFE})<1$};
\node[lbl, below=0.05cm of E2]  {$R_1(\text{RFE})<1$};
\node[lbl, below=0.05cm of EE]   {LAS when exists};

% Arrows (within gOSN branch)
\draw[arrow]  (DFE)  -- (gOSN);
\draw[darrow] (gOSN) -- (E1g);
\draw[darrow] (gOSN) -- (E2g);
\draw[darrow] (E1g)  -- (EEg);
\draw[darrow] (E2g)  -- (EEg);
% Arrows (within RFE branch)
\draw[darrow] (gOSN) -- (RFE);
\draw[darrow] (RFE)  -- (E1);
\draw[darrow] (RFE)  -- (E2);
\draw[arrow]  (E1)   -- (EE);
\draw[arrow]  (E2)   -- (EE);
% Cross-branch arrows (W-invasion from gOSN-branch nodes to RFE-branch nodes)
\draw[dotarrow] (E1g) to[bend left=20]  (E1);
\draw[dotarrow] (E2g) to[bend right=20] (E2);
\draw[arrow]    (EEg) -- (EE);
\end{tikzpicture}
\caption{Relay graph for the nine equilibria ($\omega=0$; see Definition~\ref{d:relay-type}).
Existence conditions appear above each node and extra stability conditions below.
Solid arrows: \emph{full} relays (single threshold simultaneously creates successor and
destabilises predecessor): DFE$\to$gOSN, $E_1\to$EE, $E_2\to$EE, $\mathrm{EE}_g\to$EE.
Dashed arrows: \emph{multiple} relays (predecessor has multiple instability directions):
gOSN$\to E_{jg}$, gOSN$\to$RFE, $E_{jg}\to\mathrm{EE}_g$, RFE$\to E_j$.
Dotted arrows: cross-branch multiple relay s driven by W-invasion ($R_0>1+\fr{\beta}{\beta_w}$):
$E_{1g}\to E_1$, $E_{2g}\to E_2$, $\mathrm{EE}_g\to\mathrm{EE}$.
 %gOSN has three relay directions, and each of $E_{1g}$, $E_{2g}$, RFE has two relay
 %directions.
}
\label{fig:relay0}
\end{figure}

{The mechanism behind the relay mechanisms  in
Table~\ref{tab:OSN_equilibria_om0}}
will be explained in the following section.

\ssec{Explanation of Table~\ref{tab:OSN_equilibria_om0}}

The relay structure of Table~\ref{tab:OSN_equilibria_om0} rests on three ingredients:
\begin{enumerate}[label=(\roman*)]
\item \emph{Forward invariance} of faces implies block factorizations which yield
transversal Jacobian blocks (Definition~\ref{d:transblock}) admitting \regS.

\beD[transversal Jacobian block of a siphon]\label{d:transblock}
Let $\sigma$ be a siphon with face $\mathcal{F}_\sigma:=\{x_\sigma=0\}$.
Since $\mathcal{F}_\sigma$ is forward-invariant, $f_\sigma$ must vanish on
$\mathcal{F}_\sigma$ and may be factored (non-uniquely) as
$f_\sigma(x)=M_\sigma(x)\,x_\sigma$.
A smooth matrix-valued function $x\mapsto M_\sigma(x)$, defined for all
$x\in\mathcal{F}_\sigma$ as above, will be called a
\emph{transversal Jacobian block} of $\sigma$.
\eeD

\beR[Non-uniqueness of $M_\sigma$]
The factorization $f_\sigma(x)=M_\sigma(x)\,x_\sigma$ is not unique off the face
$\mathcal{F}_\sigma$: any smooth matrix $M_\sigma(x)+N(x)$ with
$N(x)\,x_\sigma\equiv 0$ gives an equally valid factorization.
However, on the face $\mathcal{F}_\sigma$, the value
$M_\sigma(x)\big|_{x_\sigma=0}=\partial f_\sigma/\partial x_\sigma\big|_{x_\sigma=0}$
is uniquely determined (it is the $(\sigma,\sigma)$ block of $Df(x)$), so the spectral
abscissa $\alpha(M_\sigma(E^*))$ and its sign, the only quantities needed below,
are independent of any off-face extension.
Thus ``transversal Jacobian block'' is justified in the same sense that
``basic reproduction number'' is: a heuristic name for a quantity whose
\emph{relevant properties} are well-defined even when the underlying
definition is not unique.
\eeR

For example, for $\sigma=\{S_j,B_j\}$ with face $\Pi_j=\{S_j=B_j=0\}$,
the OSN equations
\[
  \dot S_j = f_j(U,B_j) - \gamma_j S_j,\qquad
  \dot B_j = \gamma_j S_j - \mu_j B_j,\qquad j=1,2,
\]
may be written in \emph{block-factored} form
\begin{equation}\label{eq:OSN-block-factor}
  \begin{pmatrix}\dot S_j\\\dot B_j\end{pmatrix}
  \;=\;
  M_j\,
  \begin{pmatrix}S_j\\B_j\end{pmatrix},
  \qquad
  M_j\;:=\;
  \begin{pmatrix}-\gamma_j & \hat f_j(U,B_j)\\\gamma_j & -\mu_j\end{pmatrix},
\end{equation}
where $f_j(U,B_j)=B_j\,\hat f_j(U,B_j)$ and
$\hat f_j(U,0)=(\partial_{B_j}f_j)(U,0)>0$.
On $\Pi_j$ this gives
\begin{equation}\label{eq:Mj}
  M_j(x)
  \;=\;
  \begin{pmatrix}-\gamma_j & (f_j)_B(U,0)\\\gamma_j & -\mu_j\end{pmatrix}=\begin{pmatrix}0 & \hat f_j(U,0)\\0 & 0\end{pmatrix}-\begin{pmatrix}\gamma_j & 0\\-\gamma_j & \mu_j\end{pmatrix},
  \qquad x\in\Pi_j,
\end{equation}
which is a \regS, since the inverse of the second matrix is \nne, and the first matrix is \nne.

\item \emph{Sign equivalence} between transversal spectral abscissas and basic
reproduction numbers, which holds for any ME-type siphon (cf. \cite{Varga,Van}  and our definition \ref{d:VdD}):
\begin{equation}\label{eq:OSN-invasion}
  \operatorname{sign}\bigl(\alpha(M_\sigma(E^*))\bigr)
  \;=\;
  \operatorname{sign}\bigl(R_\sigma(E^*)-1\bigr).
\end{equation}

\item[(iii)] Tangential Hurwitz condition at  resident equilibria.
Let $E^*\in\mathcal F_\Sigma$ be a face equilibrium, and assume it is
face-Hurwitz (see Definition below), which ensures
that all instabilities at $E^*$ are necessarily transversal to
$\mathcal F_\Sigma$.

For a given cover $\Sigma'\prec\Sigma$ with invader block
$\sigma:=\Sigma\setminus\Sigma'$, assume that the corresponding invasion number
satisfies $\al_\sigma(E^*)=0$, while all other invasion numbers remain strictly
subcritical, $\al_{\sigma'}(E^*)<0$ for $\sigma'\neq\sigma$.
Then $Df(E^*)$ has a simple zero eigenvalue associated with the block $\sigma$,
and no other eigenvalues on the imaginary axis.
Consequently, the boundary transcritical bifurcation theorem applies in the
$\sigma$--direction, yielding a locally unique equilibrium branch entering the
adjacent face $\mathcal F_{\Sigma'}$.
If several invasion numbers cross $1$ simultaneously, the center dimension exceeds
one and a higher--codimension analysis is required.

\end{enumerate}

\beD [face-Hurwitzness] We say that $E^*\in\mathcal F_\Sigma$ is \emph{face--Hurwitz} if the Jacobian of the
face-restricted dynamics on $\mathcal F_\Sigma$ (i.e.\ the tangential block of $Df(E^*)$) is Hurwitz.\eeD

Ingredients~(i) and~(ii) are automatic consequences of the ME-type siphon structure
and require no model-specific verification.
The only condition requiring work for each relay step is~(iii): checking that tangential
Hurwitzness is tantamount to the
Extra Stability conditions of Table~\ref{tab:OSN_equilibria_om0}.
The following theorem makes this more precise.

\beT[Relay across a distance-one siphon step]\label{thm:relay-abstract}
Let $\dot x = f(x)$ be a smooth positive ODE on $\mathbb{R}^n_{\ge 0}$ with minimal
siphons $\sigma_1,\ldots,\sigma_k$.
A \emph{distance-one cover} in the siphon lattice is a pair $(\Sigma',\Sigma)$ with
$\Sigma = \Sigma'\cup\sigma$ for some minimal siphon $\sigma\not\subseteq\Sigma'$
and no siphon strictly between $\Sigma'$ and $\Sigma$; we call $\sigma$ the
\emph{invader block}.
For a face equilibrium $E^*\in\mathcal{F}_\Sigma$ we distinguish three classes of variables:
\begin{itemize}
\item \emph{Resident variables}: $\{x_i:i\notin\Sigma\}$, strictly positive at $E^*$.
\item \emph{Latent variables}: $x_{\Sigma'}:=\{x_i:i\in\Sigma'\}$, zeroed at $E^*$
  and still zeroed at the relay successor $\tilde{E}\in\mathcal{F}_{\Sigma'}$.
\item \emph{Non-invader variables}: $\{x_i:i\notin\sigma\}$ = resident $\cup$ latent.
\end{itemize}
Let $M_\sigma(E^*)$ be the transversal Jacobian block
\textup{(}Definition~\ref{d:transblock}\textup{)} and $R_\sigma(E^*)$ the invasion
number \textup{(}Definition~\ref{d:reF}\textup{)}.
Assume:
\begin{enumerate}[label=\textup{(H\arabic*)}]
\item $\sigma$ is a simple ME-type siphon \textup{(}Definitions~\ref{d:VdD}--\ref{d:simpM}\textup{)};
  in particular, $M_\sigma(E^*)$ has a simple zero eigenvalue when $R_\sigma(E^*)=1$.
\item The tangential block $J_{\mathrm{tang}}(E^*)$---the restriction of $Df(E^*)$
  to the non-invader variables $\{x_i:i\notin\sigma\}$---is Hurwitz.
\end{enumerate}
Then:
\begin{enumerate}[label=\textup{(\alph*)}]
\item \emph{(Relay-pair identity.)}
  $E^*$ is transversally unstable in the $\sigma$-direction if and only if
  $R_\sigma(E^*)>1$, which coincides with the existence threshold of the relay
  successor $\tilde{E}$ with $x_\sigma>0$.
\item \emph{(Transcritical bifurcation.)}
  At $R_\sigma(E^*)=1$ there is a transcritical bifurcation transverse to
  $\mathcal{F}_\Sigma$: a branch with $x_\sigma>0$ emerges into $\mathcal{F}_{\Sigma'}$
  and $E^*$ simultaneously loses transversal stability.
\end{enumerate}
\eeT

\begin{proof}
\textit{Block structure.}
Since $\sigma$ is a minimal siphon, $\mathcal{F}_\sigma:=\{x_\sigma=0\}$ is
forward-invariant (Theorem~\ref{s:wl}), so $f_\sigma(x)=M_\sigma(x)\,x_\sigma$.
Differentiating at $E^*$ (where $x_\sigma^*=0$) yields $\partial_y f_\sigma|_{E^*}=0$
for all non-invader $y$, hence $Df(E^*)$ is block lower-triangular:
\[
  Df(E^*)=\begin{pmatrix}M_\sigma(E^*) & 0 \\ * & J_{\mathrm{tang}}(E^*)\end{pmatrix}.
\]
By~(H2), $\alpha(J_{\mathrm{tang}})<0$, so stability is governed solely by
$\alpha(M_\sigma(E^*))$.

\textit{Part~(a).}
By~(H1), $M_\sigma(E^*)$ is Metzler with regular splitting $M_\sigma=F-V$.
The NGM theorem \cite{Van} gives
$\operatorname{sign}(\alpha(M_\sigma(E^*)))=\operatorname{sign}(R_\sigma(E^*)-1)$.

\textit{Part~(b).}
At $R_\sigma(E^*)=1$, the zero eigenvalue of $M_\sigma(E^*)$ is simple by~(H1);
combined with~(H2), $Df(E^*)$ has exactly one simple zero eigenvalue.
Since $\mathcal{F}_\Sigma$ is forward-invariant, the centre manifold is transverse to
$\mathcal{F}_\Sigma$, and the standard boundary transcritical bifurcation theorem
\cite{Van,Boldin} yields a branch with $x_\sigma>0$ entering
$\mathcal{F}_{\Sigma'}$.
\end{proof}

\beR
Three facts hold automatically and need not be listed as hypotheses:
\textup{(i)} face invariance of $\mathcal{F}_\Sigma$ (Theorem~\ref{s:wl});
\textup{(ii)} the block lower-triangular structure of $Df(E^*)$;
\textup{(iii)} the sign equivalence~\eqref{eq:OSN-invasion} for any ME-type siphon.
The only genuine requirements are~(H1) (simple-ME structure) and~(H2) (tangential
Hurwitzness).
\eeR

\beR[Resident vs.\ non-invader variables]
Resident and non-invader variables coincide only for first-level relay steps
($\Sigma'=\emptyset$, $\Sigma$ itself minimal).
For higher-level steps ($\Sigma'\neq\emptyset$), the latent variables $x_{\Sigma'}$
are non-invader but not resident, and (H2) requires Hurwitzness of the full
non-invader block---including the latent sub-block.
In the OSN relay $\mathrm{gOSN}\to E_{1g}$ ($\sigma=\{S_1,B_1\}$, latent
$=\{W,S_2,B_2\}$), (H2) holds under $R_2(\mathrm{gOSN})<1$ (controls $\{S_2,B_2\}$)
and $R_0^W<1$ (controls $\{W\}$).
\eeR

\beR[Disjoint minimal siphons make every step a cover]
When minimal siphons are pairwise disjoint---as in the OSN model---the
``no siphon strictly between'' clause is automatic: any siphon $X$ with
$\Sigma'\subseteq X\subseteq\Sigma'\cup\sigma$ must contain a minimal
$\sigma_l\not\subseteq\Sigma'$, forcing $\sigma_l\cap\sigma\neq\emptyset$, hence
$\sigma_l=\sigma$ and $X=\Sigma'\cup\sigma$.
Thus every pair $(\Sigma',\Sigma'\cup\sigma)$ with $\sigma$ minimal is a cover.
\eeR

Theorem~\ref{thm:relay-abstract} applies to the OSN model as follows.
All OSN siphons are simple ME-type, so (H1) is automatic.
The only content is (H2), which reduces to the Extra Stability conditions in
Table~\ref{tab:OSN_equilibria_om0}.

\beT[Relay structure of the OSN model with omega=0]\label{thm:relay}
Let $E^*$ be any face equilibrium of the OSN system with $\omega=0$ appearing in
Table~\ref{tab:OSN_equilibria_om0}, and let $\sigma=\{S_j,B_j\}$ or $\sigma=\{W\}$
be an absent siphon block at $E^*$.
\begin{enumerate}[label=\textup{(\alph*)}]
\item \emph{(Relay-pair identity.)}
  $E^*$ is transversally unstable in the $\sigma$-direction if and only if
  $R_\sigma(E^*)>1$, which is the existence condition for the relay successor $\tilde E$.
\item \emph{(Transcritical bifurcation.)}
  Under the Extra Stability conditions of $E^*$'s row, at $R_\sigma(E^*)=1$
  a branch with $x_\sigma>0$ bifurcates from $E^*$ into the positive orthant,
  and $E^*$ simultaneously loses LAS.
\item \emph{(LAS of the relay successor.)}
  $\tilde E$ is LAS if and only if the Extra Stability conditions in its row hold.
  In particular, $\mathrm{EE}$ and $\mathrm{EE}_g$ are always LAS when they exist.
\end{enumerate}
\eeT

\begin{proof}
All OSN siphon blocks $\{S_j,B_j\}$ and $\{W\}$ are simple ME-type
(Definitions~\ref{d:VdD}--\ref{d:simpM}), so~(H1) of
Theorem~\ref{thm:relay-abstract} holds automatically for every relay step.
Parts~(a) and~(b) then follow from Theorem~\ref{thm:relay-abstract}(a)--(b)
once~(H2) is verified; part~(c) follows by applying part~(a) to $\tilde E$.
It remains only to verify~(H2)---tangential Hurwitzness---entry by entry;
this is done below.
\end{proof}

\beR
Parts~(a)--(b) are independent of $\omega$: the strain equations~\eqref{eq:OSN-block-factor}
contain no $\omega$, so the block factorization, invasion formula, and sign
equivalence hold for any $\omega\ge0$.
For the withdrawal relay $\mathrm{gOSN}\to\mathrm{RFE}$, on the strain-free face
the $(W,R)$-block at gOSN is upper-triangular with spectral abscissa
$\beta_w U^*-\mu=\mu(R_0^W-1)$, so this relay too is governed by $R_0^W$ for all
$\omega\ge0$.
\eeR

\paragraph{Verification of tangential Hurwitzness~(H2) entry by entry.}

\smallskip
\noindent\emph{(a) Relay $\mathrm{gOSN}\to E_{jg}$, $j=1,2$.}
The gOSN equilibrium is
\[
  E^*_{\mathrm{gOSN}}
  \;=\;
  \Bigl(x_1^*,\,U^*,\,0,\,0,\,0,\,0,\,0\Bigr),
  \qquad
  x_1^* = \frac{\mu_n}{\beta},\quad U^* = \frac{\mu(R_0-1)}{\beta},
\]
which exists iff $R_0>1$.
The invader block is $\sigma=\{S_j,B_j\}$; the non-invader variables are
$\{x_1,U,W,S_{3-j},B_{3-j}\}$.
The tangential block $J_{\mathrm{tang}}$ is block upper-triangular
(resident variables $y$ placed first, invader block second;
opposite to the convention of Theorem~\ref{thm:relay-abstract}):
\[
  J^* = \begin{pmatrix} J_{\mathrm{tang}} & * \\ 0 & M_j(E^*) \end{pmatrix}.
\]
Under the Extra Stability conditions $R_{3-j}(\mathrm{gOSN})<1$ and
$R_0<1+\beta/\beta_w$, the block $J_{\mathrm{tang}}$ is Hurwitz: the condition
$R_{3-j}<1$ controls the $\{S_{3-j},B_{3-j}\}$ sub-block and $R_0<1+\beta/\beta_w$
controls $\{W\}$.
This verifies (H2), completing the proof of parts~(a)--(b) for this relay.
The right null-vector of $M_j(E^*)$ at $R_j=1$ is
$v_j=(\mu_j,\gamma_j)^\top\gg0$, confirming the branch enters
$\{S_j>0,B_j>0\}$.

\smallskip
\noindent\emph{(b) Relay $\mathrm{gOSN}\to\mathrm{EE}_g$.}
When $\max_j R_j(\mathrm{gOSN})>1$, both strain blocks are unstable at gOSN;
sequential relays via $E_{1g}$ or $E_{2g}$ reach $\mathrm{EE}_g$.
The Extra Stability condition $R_0<1+\beta/\beta_w$ for $\mathrm{EE}_g$ ensures
$W$ remains uninvadable (H2 for the $\{W\}$ block at $\mathrm{EE}_g$).

\smallskip
\noindent\emph{(c) Relays $\mathrm{RFE}\to E_j\to\mathrm{EE}$.}
At RFE, $U^*=\mu/\beta_w$.  The same argument as in~(a) applies with gOSN replaced
by RFE and $R_j(\mathrm{RFE})$ in place of $R_j(\mathrm{gOSN})$.
The terminal equilibrium EE carries no Extra Stability condition: (H2) is vacuous
there, and LAS follows from part~(a) applied to EE as the relay successor.

\smallskip
\noindent\emph{(a) Relay $\mathrm{gOSN}\to E_{jg}$, $j=1,2$ (boundary--compatible transcritical).}
The only point that requires care is that $\mathrm{gOSN}$ lies on the invariant face
$\Pi_j=\{S_j=B_j=0\}$, so the bifurcating branch must be shown to cross \emph{from the
face into the positive orthant}. This is a standard ``transcritical bifurcation on an
invariant coordinate face'' and may be proved directly from the center--manifold
reduction, without appealing to an ``interior'' theorem.

\medskip
\noindent\textbf{Step 3(a.1): reduce to a one--dimensional center variable.}
Fix $j\in\{1,2\}$ and consider parameters near a threshold where
$R_j(\mathrm{gOSN})=1$. Under the ``Extra Stability'' hypotheses in
Table~\ref{tab:OSN_equilibria_om0} (namely $R_{3-j}(\mathrm{gOSN})<1$ and
$R_0<1+\beta/\beta_w$), all eigenvalues of the Jacobian at $\mathrm{gOSN}$ except those
in the $(S_j,B_j)$ block have negative real part. By~(H1) of Theorem~\ref{thm:relay-abstract}, at $R_j(\mathrm{gOSN})=1$
the transversal block $M_j(E^*_{\mathrm{gOSN}})$ has a \emph{simple} eigenvalue $0$ and
one stable eigenvalue $-(\gamma_j+\mu_j)$; hence the full Jacobian has a one--dimensional
center subspace and the center manifold is one--dimensional and $C^2$ (e.g.\
\cite[Ch.~3]{Carr1981} or \cite[Ch.~1]{Wiggins2003}).

Let $q_j\gg0$ be the (right) eigenvector of $M_j(E^*_{\mathrm{gOSN}})$ associated with
the eigenvalue $0$ (one may choose $q_j$ with positive components since $M_j$ is Metzler
at the face). Introduce a scalar center coordinate $x$ along the positive ray
$(S_j,B_j)=x\,q_j$.

\medskip
\noindent\textbf{Step 3(a.2): invariance forces the ``$x$ factor'' (this is the boundary part).}
Because the face $\Pi_j$ is forward invariant, \eqref{eq:OSN-block-factor} implies that the
vector field in the $(S_j,B_j)$--variables vanishes at $(S_j,B_j)=(0,0)$, and therefore
the reduced center dynamics must have the form
\begin{equation}\label{eq:cm-factor}
\dot x = x\,\Phi(x,\eta),
\qquad \eta:=R_j(\mathrm{gOSN})-1,
\end{equation}
with $\Phi$ smooth. In particular, $x\equiv0$ is an equilibrium for all $\eta$, which
encodes the fact that the equilibrium branch on the face persists.

\medskip
\noindent\textbf{Step 3(a.3): normal form and the crossing into the positive orthant.}
Taylor expanding $\Phi$ at $(x,\eta)=(0,0)$ yields the normal form
\begin{equation}\label{eq:cm-normal-relay}
\dot x = x\,(a\,\eta + b\,x + o(|\eta|+|x|)),
\end{equation}
where $a\neq0$ because $\partial_\eta \alpha(M_j(E^*_{\mathrm{gOSN}}))\neq0$ at the
threshold (equivalently, the transversal eigenvalue crosses $0$ with nonzero speed),
and $b\neq0$ is the generic nondegeneracy coefficient (it may be computed from second
derivatives of the original vector field; see \cite[\S3]{Carr1981} or
\cite[\S3.4]{Kuznetsov2013}). Equation \eqref{eq:cm-normal-relay} has precisely two
equilibria near $(0,0)$:
\[
x=0
\qquad\text{and}\qquad
x_+(\eta)= -\frac{a}{b}\,\eta + o(\eta).
\]
Since $q_j\gg0$, the condition $x_+(\eta)>0$ is equivalent to $(S_j,B_j)>0$, i.e.\ the
branch enters the \emph{positive orthant} rather than leaving it. Thus, on the side of
the threshold where $x_+(\eta)>0$, one obtains a unique equilibrium branch with
$S_j,B_j>0$ bifurcating from $\mathrm{gOSN}$; this is exactly the equilibrium $E_{jg}$
listed in Table~\ref{tab:OSN_equilibria_om0}. Moreover, the linearization of
\eqref{eq:cm-normal-relay} shows the exchange of stability: as $\eta$ changes sign,
the equilibrium on the face ($x=0$, i.e.\ $\mathrm{gOSN}$) loses transversal stability
and the interior branch ($x=x_+(\eta)$, i.e.\ $E_{jg}$) becomes stable along the center
direction.

\medskip
\noindent\textbf{Intuition.}
The boundary aspect is entirely encoded in the factorization \eqref{eq:cm-factor}:
invariance forces the reduced vector field to vanish at $x=0$ for all parameters, so
when the transversal growth rate changes sign (controlled by $\eta=R_j(\mathrm{gOSN})-1$),
a second root of $\dot x$ must appear generically. The sign of this root determines
whether the new equilibrium lies in the admissible region ($x>0$). No conservation law
is used; the mechanism is local but it applies at \emph{every} invariant face
equilibrium, not only at the DFE.

\medskip
\noindent\textbf{References.}
A full proof of the reduction and normal form is standard; see the center--manifold and
bifurcation treatments in \cite{Carr1981,Wiggins2003,Kuznetsov2013}. In population and
epidemiological models, the same ``invasion through a boundary equilibrium'' mechanism
(including the critical case and direction of the bifurcation) is discussed explicitly
in \cite{Boldin}.

\section{Algorithmic detection of relay transitions along the siphon lattice}\lbl{s:alg}

\subsection{Relay test along a distance--one step in a siphon lattice: algorithm and  code}

\paragraph{Purpose.}
Given a positive ODE $\dot x=f(x,p)$ on $\mathbb R^n_{\ge0}$ (with rational parameter point $p$),
and a distance-one cover $(\Sigma',\Sigma)$ in the siphon lattice (invader block $\sigma=\Sigma\setminus\Sigma'$,
see Theorem~\ref{thm:relay-abstract}), the goal is to
decide whether a \emph{relay} occurs from a face equilibrium $E^*\in\mathcal{F}_\Sigma$ to a face
equilibrium on the adjacent (less constrained) face $\mathcal{F}_{\Sigma'}$.

\subsection{Distance--one does not mean a one--species invader}
In the OSN table (for $\omega=0$), the relay $\mathrm{gOSN}\to E_{1g}$ frees the \emph{block}
$(S_1,B_1)$ at once, not a single variable. Thus the ``distance--one'' move must be interpreted
at the level of the \emph{siphon lattice}, not the number of freed coordinates.

We use the lattice terminology of Theorem~\ref{thm:relay-abstract}: a
\emph{distance-one cover} $(\Sigma',\Sigma)$ in the siphon lattice has
\emph{invader block} $\sigma=\Sigma\setminus\Sigma'$, corresponding face
$\mathcal{F}_\Sigma:=\{x_s=0:s\in\Sigma\}$, and face equilibrium $E^*\in\mathcal{F}_\Sigma$.
In the OSN model the cover $(\{W,S_2,B_2\},\{W,S_1,B_1,S_2,B_2\})$ has invader block
$\sigma=\{S_1,B_1\}$.

\subsection{The shared inequality explaining the ``coincidence''}
Let $E^*\in\mathcal{F}_\Sigma$ be a face equilibrium. Write $x=(x_\sigma,\bar x)$ where
$x_\sigma$ are the invader (freed) coordinates and $\bar x$ are the non-invader coordinates.
Face invariance of $\mathcal{F}_\sigma=\{x_\sigma=0\}$ (because $\sigma$ is a minimal siphon) implies
\[
f_\sigma(\bar x,0,p)\equiv 0.
\]
Hence the transversal linearization at $E^*=(0,\bar x^\ast)$ is the \emph{single matrix}
\begin{equation}\label{eq:transblock}
M_\sigma(E^*):=D_{x_\sigma} f_\sigma(\bar x^\ast,0,p)\in\mathbb R^{|\sigma|\times|\sigma|}.
\end{equation}
In positive/CRN settings, $M_\sigma(E^*)$ is typically Metzler. The \emph{shared inequality} is then
\begin{equation}\label{eq:shared}
\boxed{\ \alpha\!\bigl(M_\sigma(E^*)\bigr)>0\ }\qquad
(\alpha=\text{spectral abscissa}).
\end{equation}
It is ``shared'' because:
\begin{itemize}\setlength{\itemsep}{0pt}
\item it is \emph{exactly} the criterion that $E^*$ is transversally unstable in the freed block $\sigma$
  (Theorem~\ref{thm:relay-abstract}(a));
\item the same loss of invertibility of \eqref{eq:transblock} at $\alpha=0$ is the local mechanism by which
a nearby equilibrium branch on $\mathcal{F}_{\Sigma'}$ may enter with $x_\sigma>0$
(boundary transcritical on the invariant face $\mathcal{F}_\sigma$).
\end{itemize}
Thus the ``coincidence'' is not between two unrelated computations: it is the single condition
\eqref{eq:shared} evaluated at $E^*$.

\beR[OSN ($\omega=0$) dictionary]
In Table~\ref{tab:OSN_equilibria_om0}, for the relay $\mathrm{gOSN}\to E_{1g}$ one has
$\Sigma=\{W,S_1,B_1,S_2,B_2\}$, $\Sigma'=\{W,S_2,B_2\}$, so $\sigma=\Sigma\setminus\Sigma'=\{S_1,B_1\}$.
Then $M_\sigma(\mathrm{gOSN})$ is precisely the $2\times2$ block $M_1(\mathrm{gOSN})$,
and \eqref{eq:shared} is equivalent to $R_1(\mathrm{gOSN})>1$.
\eeR

\subsection{Algorithm}
\paragraph{Input.}
A rational vector field $f(x,p)$ defining $\dot x=f(x,p)$ on $\mathbb R^n_{\ge0}$,
a rational parameter point $p=p_0$, and a distance-one cover $(\Sigma',\Sigma)$ in the siphon lattice
with invader block $\sigma=\Sigma\setminus\Sigma'$.

\paragraph{Output.}
\texttt{RelayHolds}, \texttt{NoRelay}, or \texttt{Undecided} (timeout / non-rational intermediates).

\paragraph{Procedure.}
\begin{enumerate}\setlength{\itemsep}{0pt}
\item[\textbf{(A)}] \textbf{Compute face equilibria exactly.}
Solve $f(x,p_0)=0$ under the constraints $x_i=0$ for $i\in\Sigma$ (resp.\ $i\in\Sigma'$),
retaining only nonnegative solutions. If any intermediate quantity is non-rational, return \texttt{Undecided}.
\item[\textbf{(B)}] \textbf{Choose candidates.}
Pick a face equilibrium $E^*\in\mathcal{F}_\Sigma$. For $\mathcal{F}_{\Sigma'}$, keep only equilibria
$\tilde E\in\mathcal{F}_{\Sigma'}$ with $x_\sigma(\tilde E)\gg 0$ (since $\sigma$ is the freed block).
\item[\textbf{(C)}] \textbf{Compute the shared inequality at $E^*$.}
Form the transversal block $M_\sigma(E^*)$ in \eqref{eq:transblock}. If $M_\sigma(E^*)$ is Metzler,
compute $\alpha(M_\sigma(E^*))$ (exactly when rational; otherwise return \texttt{Undecided}).
If $\alpha(M_\sigma(E^*))\le 0$, return \texttt{NoRelay}.
\item[\textbf{(D)}] \textbf{Confirm by stability of $\tilde E$ (optional but decisive).}
Compute $J(\tilde E)=D_x f(\tilde E,p_0)$ and test Hurwitzness exactly (e.g.\ exact Routh--Hurwitz if coefficients are rational).
If some $\tilde E$ is Hurwitz, return \texttt{RelayHolds}; otherwise return \texttt{NoRelay}.
\end{enumerate}

\paragraph{What explains the coincidence.}
Step \textbf{(C)} is the explanation: the same matrix $M_\sigma(E^*)$ is the linearization governing
transversal stability of $E^*$ and the local bifurcation mechanism by which an equilibrium with $x_\sigma>0$
can branch from the invariant face $\mathcal{F}_\sigma$.

\beR[Termination and timeout]
The algorithm terminates with \texttt{RelayHolds} or \texttt{NoRelay} whenever every
characteristic polynomial coefficient encountered in steps~(A)--(D) is rational in the
parameters.  A \texttt{timeout} or \texttt{Undecided} return reflects symbolic
non-decidability of the relay condition under the given kinetics---it is a principled
limitation of exact arithmetic, not a numerical instability.
\eeR

\paragraph{Relation to existing literature.}
Classical next–generation matrix theory characterizes invasion at the disease–free
equilibrium via a one–dimensional threshold.
In the present work, invasion occurs along minimal siphons, which may involve
several species simultaneously.
The relevant transversal object is therefore a Metzler block
$M_\sigma$ associated with a minimal siphon $\sigma$, and invasion is governed by the
sign of its spectral abscissa $\alpha(M_\sigma)$.
While related block–invasion criteria appear implicitly in persistence theory
and in multi–strain epidemic models, the explicit organization of equilibria
and relays along the lattice generated by minimal siphons appears to be new.
Analytic tools such as resolvent bounds and rank–one perturbation formulas
(e.g.\ Badr--Alexiades) are used here only to control stability once a relay
candidate is identified, not to explain the relay mechanism itself.

\medskip\noindent\textbf{Relation to Hofbauer invasion graphs.}
Invasion graphs in the sense of Hofbauer encode which resident communities
(boundary equilibria) can be invaded by which missing species, based on the sign of
invasion rates evaluated at the resident equilibrium.
This is conceptually close to the relay picture, since both are organized by invasion
numbers evaluated at boundary equilibria.
The relay graphs considered here differ in two structural respects:
\begin{enumerate}\setlength{\itemsep}{2pt}
\item \emph{Nodes ordered by inhabited siphon faces.}
Equilibria are indexed by faces in the siphon lattice, introducing a canonical
partial order and a natural notion of adjacency (distance-one covers $(\Sigma',\Sigma)$),
rather than treating equilibria as an unstructured set of communities.
\item \emph{Edges are canonical cover steps labeled by reproduction functions.}
Each edge corresponds to releasing a minimal siphon block $\sigma$ along a cover
$(\Sigma',\Sigma)$, and its label is the evaluation $R_\sigma(E^*)$.
The same inequality governs simultaneously the transversal instability of the
resident node and the existence condition of the successor node.
\end{enumerate}
From this perspective, Hofbauer's invasion graphs may be viewed as coarse-grained
representations of invasion relations among boundary equilibria, whereas the
siphon-lattice relay graph refines this by imposing the canonical adjacency structure
from invariant faces.
Relay chains correspond to monotone paths in the siphon lattice, and the reproduction
function provides a computable labeling in the ME/CRN class considered here.

\subsection{Exact Mathematica code (succeeds if rational, returns \texttt{Undecided} else)}

\begin{verbatim}
(* Relay test along a distance-one lattice step T < S with freed block U = S\T.
   Works in exact arithmetic when all intermediate expressions are rational.
   Returns "RelayHolds", "NoRelay", or "Undecided". *)

ClearAll[RationalExprQ, ExactHurwitzQ, RelayTestCover];

RationalExprQ[expr_] :=
  FreeQ[Together[expr], _Root | _AlgebraicNumber | _Surd];

(* Exact Hurwitz test for rational matrices via characteristic polynomial + Routh-Hurwitz
   determinants (sufficient for algorithmic use; returns $Failed if non-rational). *)
ExactHurwitzQ[A_] := Module[{lam, poly, coeffs, deg, a, Hdet},
  poly = CharacteristicPolynomial[A, lam] // Expand;
  coeffs = CoefficientList[poly, lam];
  If[!RationalExprQ[coeffs], Return[$Failed]];
  deg = Length[coeffs] - 1;
  If[deg == 0, Return[True]];
  coeffs = Reverse[coeffs]; (* leading to constant *)
  If[coeffs[[1]] == 0, Return[False]];
  If[coeffs[[1]] < 0, coeffs = -coeffs]; (* normalize leading >0 *)
  a[i_] := coeffs[[i + 1]]; (* i=0..deg, a[0]=leading *)

  Hdet[k_] := Det@Table[
     a[2 i - j] /. a[t_] /; t < 0 || t > deg -> 0,
     {i, 1, k}, {j, 1, k}
  ];
  And @@ Table[ Simplify[Hdet[k] > 0], {k, 1, deg} ]
];

(* Main routine.
   RHS: list of n expressions for xdot
   vars: list {x1,...,xn}
   parRules: parameter point substitution
   S,T: lists of indices (1..n) representing siphons with T < S in the generated lattice
*)
RelayTestCover[RHS_List, vars_List, parRules_List, S_List, T_List] := Module[
  {n = Length[vars], U, subsS, subsT, IS, IT, fS, fT, solS, solT,
   ES, ET, J, MU, alphaMU, JT, hur},

  If[!SubsetQ[T, S], Return["BadInput"]];
  U = Complement[S, T];
  If[U === {}, Return["BadInput"]];

  (* Face restrictions *)
  subsS = Thread[vars[[S]] -> 0];
  subsT = Thread[vars[[T]] -> 0];
  IS = Complement[Range[n], S];
  IT = Complement[Range[n], T];
  fS = (RHS /. subsS)[[IS]];
  fT = (RHS /. subsT)[[IT]];

  (* Solve face equilibria exactly *)
  solS = Solve[Thread[(fS /. parRules) == 0], vars[[IS]], Reals];
  solT = Solve[Thread[(fT /. parRules) == 0], vars[[IT]], Reals];

  solS = Select[solS,
    (And @@ Thread[(vars[[IS]] /. #) >= 0]) && RationalExprQ[vars[[IS]] /. #] &
  ];
  solT = Select[solT,
    (And @@ Thread[(vars[[IT]] /. #) >= 0]) && RationalExprQ[vars[[IT]] /. #] &
  ];

  If[solS === {} || solT === {}, Return["NoRelay"]];

  (* Jacobian *)
  J = D[RHS, {vars}];

  (* Try each ES; declare success if some ET confirms *)
  Do[
    ES = Join[subsS, solS[[i]], parRules];

    (* Shared inequality: alpha(M_U(ES)) > 0 *)
    MU = (J[[U, U]] /. ES) // Simplify;
    If[!RationalExprQ[MU], Return["Undecided"]];
    alphaMU = Max[Re[Eigenvalues[MU]]];
    If[!RationalExprQ[alphaMU], Return["Undecided"]];
    If[Simplify[alphaMU <= 0], Continue[]];

    (* Confirm by existence/stability on F_T with x_U>0 *)
    Do[
      ET = Join[subsT, solT[[j]], parRules];
      If[Or @@ Thread[(vars[[U]] /. ET) <= 0], Continue[]];

      JT = (J /. ET) // Simplify;
      If[!RationalExprQ[JT], Return["Undecided"]];
      hur = ExactHurwitzQ[JT];
      If[hur === $Failed, Return["Undecided"]];
      If[hur === True, Return["RelayHolds"]],
      {j, Length[solT]}
    ],
    {i, Length[solS]}
  ];

  "NoRelay"
];
\end{verbatim}

\paragraph{How to read the code.}
The relay ``coincidence'' is implemented by the single test
\[
\alpha\!\bigl(M_\sigma(E^*)\bigr)>0,
\]
computed as the spectral abscissa of the transversal block $J(E^*)_{\sigma,\sigma}$.
The remaining step checks whether the relay-successor $\tilde E$ with $x_\sigma(\tilde E)\gg 0$
exists (via exact solving on $\mathcal{F}_{\Sigma'}$) and is Hurwitz
(via exact Routh--Hurwitz on the full Jacobian).

\beR[The common object]
The ``coincidence'' between existence and transversal stability of a relay pair has a single
explanation: both are governed by the \emph{invader block map}
$x_\sigma\mapsto f_\sigma(\bar x^\ast,x_\sigma)$ on the siphon face $\mathcal{F}_\sigma$.
Transversal stability is decided by the linearization $M_\sigma(E^*)$ at $x_\sigma=0$, and
the existence of the relay successor on $\mathcal{F}_{\Sigma'}$ is governed by the same matrix
losing invertibility (boundary transcritical bifurcation).
The abstract statement is Theorem~\ref{thm:relay-abstract}.
\eeR

\beR[Four structural differences from the classical transcritical bifurcation]
Although each relay transition is locally a transcritical bifurcation, it differs
from the classical transcritical bifurcation in four structural respects, which
explain why relay tables are rigid and algorithmic.
\begin{enumerate}\setlength{\itemsep}{2pt}
\item \textbf{Geometric origin of the critical eigenvalue.}
In the classical setting the zero eigenvalue arises from a generic parameter
unfolding at an ambient-space equilibrium.
In a transcritical relay the critical eigenvalue is \emph{enforced} by the
forward-invariant face $\mathcal{F}_\Sigma$: face invariance forces a block form
of $Df(E^*)$ and the zero eigenvalue belongs to the transversal block $M_\sigma(E^*)$.
The threshold is anchored in face geometry, not in an arbitrary unfolding.

\item \textbf{Block invasion rather than a single direction.}
Classical transcritical bifurcation is commonly presented with a one-dimensional
critical direction.
In a relay the invader is a minimal siphon block $\sigma$, which may contain several
coordinates.
Transversal instability and successor existence are both governed by the Metzler
block $M_\sigma(E^*)$, not by a single coordinate direction.

\item \textbf{Lattice-prescribed successor.}
In classical transcritical bifurcation, two equilibrium branches intersect, but the
bifurcation does not by itself prescribe a global organization.
In a relay the successor equilibrium is constrained to lie on the adjacent face
$\mathcal{F}_{\Sigma'}$ determined by the cover $(\Sigma',\Sigma)$ in the siphon lattice.
This combinatorial constraint is what makes relay graphs canonically defined.

\item \textbf{Shared inequality for instability and existence.}
In the classical normal form, the link between existence of a nontrivial branch and
instability of the trivial branch is a local property of a specific reduced equation.
In relay systems the single invasion inequality $R_\sigma(E^*)>1$ simultaneously
governs (i)~loss of transversal stability of the resident equilibrium on
$\mathcal{F}_\Sigma$ and (ii)~existence of the successor equilibrium on
$\mathcal{F}_{\Sigma'}$.
This coincidence follows from the siphon-induced factorization
$f_\sigma(\bar x,x_\sigma)=M_\sigma(\bar x,x_\sigma)\,x_\sigma$
and the Metzler/regular-splitting structure.
\end{enumerate}
\eeR

\paragraph{Normal form for a siphon-induced transcritical relay.}
\noindent\textbf{Setting.}
Let $\dot z=f(z,\theta)$ be a $C^k$ ($k\ge 3$) vector field on $\mathbb{R}^n_{\ge 0}$,
with bifurcation parameter $\theta\in\mathbb{R}$.
Assume a siphon induces the forward-invariant face
$\mathcal{F}_\Sigma:=\{x_\sigma=0\}$,
with coordinates $z=(\bar x, x_\sigma)$, where $\bar x\in\mathbb{R}^{n-m}$
are the non-invader variables and $x_\sigma\in\mathbb{R}^m$ is the invader block.
Face invariance reads
\begin{equation}\label{eq:inv-face}
f_\sigma(\bar x,0,\theta)\equiv 0,
\qquad\text{hence}\qquad
D_{\bar x} f_\sigma(\bar x,0,\theta)\equiv 0.
\end{equation}
Therefore the Jacobian at any face equilibrium $E^*=(\bar x^*,0)$ is block lower-triangular:
\[
Df(E^*,\theta)=
\begin{pmatrix}
J_{\mathrm{tang}}(\bar x^*,\theta) & 0\\
\ast & M_\sigma(\bar x^*,\theta)
\end{pmatrix},
\qquad
M_\sigma(\bar x^*,\theta):=D_{x_\sigma} f_\sigma(\bar x^*,0,\theta).
\]
Assume $(\bar x^*(\theta),0)$ is a smooth branch of face equilibria.

\smallskip\noindent\textbf{Reduction to a scalar center equation.}
Assume at $\theta=0$:
\begin{enumerate}\setlength{\itemsep}{1pt}
\item $J_{\mathrm{tang}}(\bar x^*(0),0)$ is Hurwitz;
\item $M_\sigma(\bar x^*(0),0)$ has a simple eigenvalue $0$ and all other eigenvalues
  with strictly negative real parts;
\item the remaining spectrum of $Df(E^*,0)$ is strictly stable.
\end{enumerate}
Then there exist local $C^{k-1}$ coordinates $(\eta,\xi)\in\mathbb{R}^{n-1}\times\mathbb{R}$,
with $\xi$ the center coordinate, such that the dynamics is $C^{k-1}$-conjugate to
\begin{equation}\label{eq:center-manifold-form}
\dot\eta = H(\eta,\xi,\theta),\qquad
\dot\xi = a(\theta)\,\xi + b\,\xi^2 + O(|\xi|^3+|\theta|\,|\xi|^2),
\end{equation}
where $a(0)=0$ and $b\neq 0$ (transcritical nondegeneracy).

\smallskip\noindent\textbf{Where the relay structure enters.}
\begin{itemize}\setlength{\itemsep}{2pt}
\item \emph{Transversal threshold.}
The coefficient $a(\theta)$ is the dominant eigenvalue of $M_\sigma(\bar x^*(\theta),\theta)$.
For ME-type siphons, $M_\sigma$ is Metzler and (by sign equivalence):
\[
\operatorname{sign}(a(\theta))
\;=\;
\operatorname{sign}\bigl(R_\sigma(E^*(\theta),\theta)-1\bigr).
\]
The relay threshold is thus a reproduction-function evaluation at a resident
equilibrium, not an arbitrary unfolding parameter.

\item \emph{Factorization and shared inequality.}
Face invariance~\eqref{eq:inv-face} implies $f_\sigma(\bar x,x_\sigma,\theta)
=M_\sigma(\bar x,x_\sigma,\theta)\,x_\sigma$.
The reduced center equation inherits this factorization: the same scalar
inequality governs both (i)~loss of stability of the resident branch $\xi=0$
and (ii)~appearance of the nontrivial branch $\xi(\theta)\neq 0$.
This is the local analytic origin of the relay ``shared inequality''.

\item \emph{Lattice constraint.}
The nontrivial equilibrium branch corresponds to releasing $\sigma$ along the cover
$(\Sigma',\Sigma)$; the successor equilibrium therefore lies on $\mathcal{F}_{\Sigma'}$,
as prescribed by the siphon lattice.
\end{itemize}

%%%%%%%%%%%%%%%%%%%%%%%%%%%%%%%%%%%%%%%%%

\section{Application of the relay algorithm to the case $\omega>0$}\lbl{s:omp}

\beR[Scope of this section]
This section is exploratory: equilibria for $\omega>0$ are no longer available in
closed form, so a complete classification is not attempted.
The key point is that relay \emph{predictions}---which equilibria exist and on which
siphon face, and which invasion inequalities govern their appearance---follow directly
from invasion numbers and the siphon lattice, independently of whether equilibrium
coordinates are explicit.
Explicit coordinates affect only the \emph{verification} of relay predictions, not the
predictions themselves.
The analysis is local and algebraic throughout: it does not address global dynamics
such as basins of attraction or the competition between the gOSN branch and the
RFE branch in the parameter region $R_0>1+\beta/\beta_w$ where both could in principle
coexist (as separate locally attracting sets).
\eeR

When $\omega>0$ the skeptic compartment $R$ is no longer absorbing: it feeds back into
the withdrawn-user compartment via $\dot W = \beta_w U W + \omega R - \mu W$.
Any believer flow $\mu_j B_j$ first creates skeptics ($R>0$), which then cascade into
withdrawn users ($W>0$). At any strain-endemic equilibrium,
\begin{equation}\label{eq:RW-link}
  R^* = \frac{\mu_1 B_1^* + \mu_2 B_2^*}{\omega} > 0,
  \qquad
  W^* = \frac{\omega R^*}{\mu - \beta_w U^*} > 0
  \quad \bigl(\text{when }\beta_w U^* < \mu\bigr).
\end{equation}
Hence, the gOSN-branch equilibria $E_{1g}$, $E_{2g}$, $\mathrm{EE}_g$ that appeared
in Table~\ref{tab:OSN_equilibria_om0} (with $W=0$, $R=0$) are \emph{absent}
for $\omega>0$: the $R\to W$ coupling immediately makes $W>0$ whenever strains are present.
The model therefore has six fixed points instead of nine.

We retain the notation of Section~\ref{s:relO}: $R_j(E)$ denotes the invasion number of
rumor~$j$ at equilibrium~$E$ and
$R_0^W = \frac{\beta_w}{\beta}(R_0-1)$ is the withdrawal invasion number.
The LAS and existence conditions known to date are collected in
Table~\ref{tab:OSN_equilibria_om}; the stability conditions for the strain-endemic
equilibria (marked~$\dagger$) are predicted by Theorem~\ref{thm:relay-abstract}
and remain to be verified by direct Routh–Hurwitz computation.

\ssec{Summary: fixed-point coordinates}\lbl{s:coordsomP}

\noindent\textbf{Part I: Rational equilibria} (all coordinates exact):

\noindent\scalebox{0.87}{$\displaystyle
\begin{array}{@{}l@{\quad}l}
\mathrm{OSND}: &
  (x_1,U,W,R,S_1,B_1,S_2,B_2)
  =\Bigl(\dfrac{\Lambda}{\mu},\;0,\;0,\;0,\;0,\;0,\;0,\;0\Bigr)\\[10pt]
\mathrm{gOSN}: &
  \hat x_1=\dfrac{\mu_n}{\beta},\quad
  \hat U=\dfrac{\beta\Lambda-\mu\mu_n}{\beta\mu_n},\quad
  W=R=S_1=B_1=S_2=B_2=0\\[10pt]
\mathrm{RFE}: &
  \tilde x_1=\dfrac{\Lambda\beta_w}{\mu(\beta+\beta_w)},\quad
  \tilde U=\dfrac{\mu}{\beta_w},\quad
  \tilde W=\dfrac{\beta(\tilde x_1-\hat x_1)}{\beta_w},\quad
  R=S_1=B_1=S_2=B_2=0
\end{array}$}

\bigskip
\noindent\textbf{Part II: Reducible equilibria} ($E_1$, $E_2$, EE: irrational in general).
Common rational substitutions (from \eqref{eq:x1}): once $x_1$ is known,
\begin{equation}\label{eq:ratsubstMP}
W=\frac{\beta(x_1-\hat x_1)}{\beta_w},\quad
B_j=\frac{\gamma_j S_j}{\mu_j},\quad
R=\frac{\gamma_1 S_1+\gamma_2 S_2}{\omega},\quad
U=\frac{\mu}{\beta_w}+\frac{\gamma_1 S_1+\gamma_2 S_2}{\mu_n-\beta x_1}.
\end{equation}
Setting $y:=x_1-\hat x_1>0$, $x_1$ is the positive root of a quadratic (given below).
Existence of a positive root with all coordinates positive is predicted by the relay
structure but \emph{not yet fully verified} for EE.

\medskip
\noindent\underline{$E_1$} ($S_2=B_2=0$, positive root exists iff $R_1(\mathrm{gOSN})>1$):
\[
S_1=\frac{\mu\!\left(\dfrac{\beta x_1}{\mu_n}-1\right)\bigl(\beta_1\mu-\mu_1(\alpha_1\mu+\beta_w)\bigr)}
        {\gamma_1\beta_w(-\alpha_1\mu_1+\beta_1+(\beta x_1-\mu)\epsilon_1)},
\quad S_2=0;\quad
A_1 y^2+B_1 y+C_1=0,
\]
\begin{align*}
A_1&=\beta^2(\beta+\beta_w)\mu\epsilon_1,\\
B_1&=\beta\bigl[\beta\beta_w(\mu_1-\Lambda\epsilon_1)
               +\beta_w\mu(-\alpha_1\mu_1+\beta_1+\epsilon_1\mu_n)
               +\beta\mu\epsilon_1\mu_n\bigr],\\
C_1&=-\beta_w\bigl(-\alpha_1\beta\Lambda\mu_1+\beta\beta_1\Lambda
                   +\alpha_1\mu\mu_1\mu_n-\beta_1\mu\mu_n-\beta\mu_1\mu_n\bigr).
\end{align*}

\medskip
\noindent\underline{$E_2$} ($S_1=B_1=0$): symmetric; replace $1\leftrightarrow 2$ in $A_1,B_1,C_1$ and $S_1$.

\medskip
\noindent\underline{EE} (both strains active). Set
$D_0=(\beta_2-\alpha_2\mu_2)\epsilon_1+(\beta_1-\alpha_1\mu_1)\epsilon_2-\mu\epsilon_1\epsilon_2$
and $D=D_0+\beta\epsilon_1\epsilon_2\,x_1$.

\noindent\scalebox{0.83}{$\displaystyle
S_1=\frac{\beta_1\mu_2\beta_w
         -\mu_1\beta_w(\alpha_1\mu_2-\alpha_2\mu_2+\beta_2+(\beta x_1-\mu)\epsilon_2)
         -\mu\epsilon_2(\beta_1-\alpha_1\mu_1)(\mu-\beta x_1)}
        {\gamma_1\beta_w D},
$}

\noindent\scalebox{0.83}{$\displaystyle
S_2=\frac{\beta_2\mu_1\beta_w
         -\mu_2\beta_w(-\alpha_1\mu_1+\alpha_2\mu_1+\beta_1+(\beta x_1-\mu)\epsilon_1)
         -\mu\epsilon_1(\beta_2-\alpha_2\mu_2)(\mu-\beta x_1)}
        {\gamma_2\beta_w D};
$}

\noindent $x_1$ satisfies $A_{\mathrm{EE}}x_1^2+B_{\mathrm{EE}}x_1+C_{\mathrm{EE}}=0$ with
\begin{align*}
A_{\mathrm{EE}}&=-\beta\mu\epsilon_1\epsilon_2(\beta+\beta_w),\\
B_{\mathrm{EE}}&=-\beta_w\mu D_0
               +\beta\epsilon_1\epsilon_2(\beta_w\Lambda+\mu^2)
               -\beta\beta_w(\mu_2\epsilon_1+\mu_1\epsilon_2),\\
C_{\mathrm{EE}}&=\beta_w\Lambda D_0.
\end{align*}

\begin{table}[H]
  \centering
  \small
  \caption{Equilibria of the OSN model ($\omega>0$): invaders, residents, existence and
  extra stability conditions.  Relay pairs may be read from right to left and downwards.
  Conditions marked $\dagger$ are predicted by the relay structure
  (Theorem~\ref{thm:relay-abstract}); those for OSND, gOSN, RFE are verified directly.
  Compared with Table~\ref{tab:OSN_equilibria_om0}, the gOSN-branch endemic equilibria
  $E_{1g}$, $E_{2g}$, $\mathrm{EE}_g$ are absent (they collapse into $E_1$, $E_2$, EE
  via the $R\to W$ coupling), and $W$, $R$ become residents at all strain-endemic rows.}
  \label{tab:OSN_equilibria_om}
  \begin{tabular}{|c|c|c|c|c|}
  \hline
  \textbf{Name} & \textbf{Invaders} & \textbf{Residents} & \textbf{Existence} & \textbf{Extra Stability} \\
  \hline\hline
  OSND & $U,W,S_1,B_1,S_2,B_2,R$ & $x_1$ & always & $R_0<1$ \\
  \hline
  gOSN & $W,S_1,B_1,S_2,B_2,R$ & $x_1,U$ & $R_0>1$
       & $\max_j R_j(\text{gOSN})<1$,\ $R_0<1+\fr{\beta}{\beta_w}$ \\
  \hline
  $E_1$ & $S_2,B_2$ & $x_1,U,W,S_1,B_1,R$ & $R_1(\text{gOSN})>1$
        & $R_2(\text{gOSN})<1$\,$^\dagger$ \\
  \hline
  $E_2$ & $S_1,B_1$ & $x_1,U,W,S_2,B_2,R$ & $R_2(\text{gOSN})>1$
        & $R_1(\text{gOSN})<1$\,$^\dagger$ \\
  \hline
  EE & $\emptyset$ & $x_1,U,W,S_1,B_1,S_2,B_2,R$ & $\max_j R_j(\text{gOSN})>1$\,$^\dagger$
     & always$^\dagger$ \\
  \hline
  RFE & $S_1,B_1,S_2,B_2,R$ & $x_1,U,W$ & $R_0>1+\fr{\beta}{\beta_w}$
      & $\max_j R_j(\text{RFE})<1$ \\
  \hline
  \end{tabular}
\end{table}

\beR
Every threshold in the Extra Stability column of a row also appears in the Existence
column of the next relay-successor, confirming the relay-pair structure:
losing stability at one equilibrium coincides with the appearance of the next.
The gOSN branch (upper part of Table~\ref{tab:OSN_equilibria_om}) is active when
$R_0 < 1 + \fr{\beta}{\beta_w}$ (equivalently $\hat x_1 > \tilde x_1$, i.e.\ RFE does not
exist), and the RFE branch when $R_0 > 1+\fr{\beta}{\beta_w}$.
Relay pairs from RFE (if $R_j(\text{RFE})>1$) are not yet fully analysed; they would
add further rows above RFE analogous to Table~\ref{tab:OSN_equilibria_om0}.
\eeR

%------------------------------------------------------------------
\beT[Relay structure of the OSN model when omega is positive]\label{thm:relay-omP}
The following hold for all $\omega\ge 0$.
\begin{enumerate}[label=(\roman*)]
\item \emph{(Block factorization.)} The block factorization~\eqref{eq:OSN-block-factor}
  and the invasion numbers $R_j(\mathrm{gOSN})$, $R_j(\mathrm{RFE})$ are independent
  of $\omega$: neither $\omega$, $R$, nor $W$ appears in the strain-block
  equations~\eqref{eq:OSN-invasion}, so the transversal blocks $M_j$ at gOSN and RFE
  are the same as for $\omega=0$.
\item \emph{(Relay $\mathrm{gOSN}\to E_j$, $j=1,2$: proved.)} The boundary transcritical
  argument of Theorem~\ref{thm:relay}(a) applies verbatim: $R$ and $W$ are decoupled
  from $(S_j,B_j)$ at gOSN (since $R^*=W^*=0$ there), so the zero null-vector
  $v_j=(\mu_j,\gamma_j)^\top\gg0$ gives the bifurcation into $\{S_j,B_j>0\}$.
  Once the branch appears, \eqref{eq:RW-link} forces $R,W>0$ immediately.
  The existence condition is $R_j(\mathrm{gOSN})>1$, same as for $\omega=0$.
\item \emph{(gOSN unstable toward $E_j$ and EE: partly proved;
  existence of EE and LAS of $E_j$, EE: predicted~$\dagger$.)}
  When $\max_j R_j(\mathrm{gOSN})>1$, item~(ii) shows that gOSN is transversally
  unstable in both strain directions, so the relay framework predicts sequential
  transitions $\mathrm{gOSN}\to E_j\to\mathrm{EE}$.

  \smallskip\noindent\emph{What is proved:}
  \begin{itemize}
  \item gOSN is transversally unstable in both strain directions (item~(ii) above).
  \item At any $E_j$ on the gOSN branch, $W$ is already a resident
    (forced by~\eqref{eq:RW-link}), and no new $W$-invasion threshold arises:
    the gOSN-branch assumption $R_0^W<1$ gives $U^*_{E_j}\le\hat U<\mu/\beta_w$,
    hence $\partial_W\dot W|_{E_j}=\beta_w U^*_{E_j}-\mu<0$ automatically,
    without the extra condition $R_0<1+\beta/\beta_w$ that was needed in
    $E_{1g},E_{2g}$ for $\omega=0$.
  \end{itemize}

  \smallskip\noindent\emph{What is predicted~($\dagger$), pending direct
  Routh--Hurwitz verification:}
  \begin{itemize}
  \item \emph{Stability of $E_j$} under $R_{3-j}(\mathrm{gOSN})<1$:
    the characteristic polynomial at $E_j$ involves irrational coordinates
    (Section~\ref{s:coordsomP}); the full Hurwitzness of the tangential block
    has not been verified.
  \item \emph{Existence of EE} when $\max_j R_j(\mathrm{gOSN})>1$:
    the quadratic for $x_1$ in Section~\ref{s:coordsomP} is expected to yield
    a positive root with $S_1,S_2>0$, but the exact parameter domain for
    this has not been determined.
  \item \emph{Stability of EE} (listed as ``always'' in
    Table~\ref{tab:OSN_equilibria_om}): predicted by the relay structure
    (EE has no absent siphon block to invade it) but not directly verified.
  \end{itemize}
\item \emph{(Relay $\mathrm{gOSN}\to\mathrm{RFE}$: proved.)}
  On the strain-free face $\{S_j=B_j=0\}$ one has $\dot R=-\omega R$, forcing $R^*=0$;
  the $(W,R)$-block at gOSN is upper-triangular with spectral abscissa
  $\beta_w U^*-\mu=\mu(R_0^W-1)$ (cf.\ Theorem~\ref{thm:relay} Remark).
  This relay is governed by $R_0^W$ for all $\omega\ge0$.
\item \emph{(Relays from RFE: deferred.)}
  Relays $\mathrm{RFE}\to E_j$ (if $R_j(\mathrm{RFE})>1$) are analogous to~(ii)--(iii)
  with gOSN replaced by RFE. The strain-block argument carries over; the full
  Routh--Hurwitz verification of stability at $E_j$ in this branch is left for future work.
\end{enumerate}
\eeT

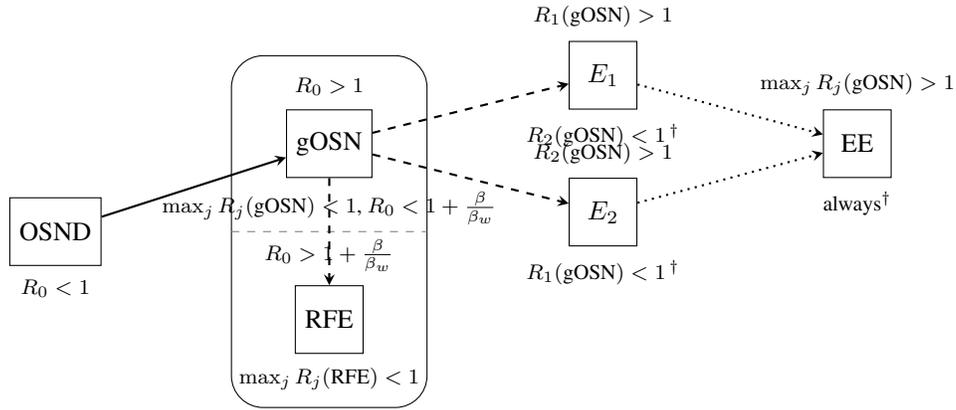
\begin{figure}[h!]
\centering
\begin{tikzpicture}[scale=1.3,
    node distance=2cm,
    box/.style={rectangle, draw, minimum size=0.9cm, font=\small},
    lbl/.style={font=\scriptsize, align=center},
    arrow/.style={->, thick, >=stealth},
    darrow/.style={->, thick, >=stealth, dashed},
    dotarrow/.style={->, thick, >=stealth, dotted}
]
% Nodes
\node[box] (OSND) at (0,   0)   {OSND};
\node[box] (gOSN) at (2.8, 0.9) {gOSN};
\node[box] (RFE)  at (2.8,-0.9) {RFE};
\node[box] (E1)   at (5.6, 1.6) {$E_1$};
\node[box] (E2)   at (5.6, 0.2) {$E_2$};
\node[box] (EE)   at (8.2, 0.9) {EE};

% Grouping boxes
\draw[rounded corners=12pt] (1.8,-1.8) rectangle (3.8, 1.8);
\draw[gray, dashed] (1.8,0) -- (3.8,0);

% Existence labels (above nodes)
\node[lbl, above=0.05cm of gOSN] {$R_0>1$};
\node[lbl, above=0.05cm of RFE]  {$R_0>1+\fr{\beta}{\beta_w}$};
\node[lbl, above=0.05cm of E1]   {$R_1(\text{gOSN})>1$};
\node[lbl, above=0.05cm of E2]   {$R_2(\text{gOSN})>1$};
\node[lbl, above=0.05cm of EE]   {$\max_j R_j(\text{gOSN})>1$};

% Stability labels (below nodes)
\node[lbl, below=0.05cm of OSND] {$R_0<1$};
\node[lbl, below=0.05cm of gOSN] {$\max_j R_j(\text{gOSN})<1$,\ $R_0<1+\fr{\beta}{\beta_w}$};
\node[lbl, below=0.05cm of RFE]  {$\max_j R_j(\text{RFE})<1$};
\node[lbl, below=0.05cm of E1]   {$R_2(\text{gOSN})<1$\,$^\dagger$};
\node[lbl, below=0.05cm of E2]   {$R_1(\text{gOSN})<1$\,$^\dagger$};
\node[lbl, below=0.05cm of EE]   {always$^\dagger$};

% Arrows
\draw[arrow]  (OSND) -- (gOSN);
\draw[darrow] (gOSN) -- (RFE);
\draw[darrow] (gOSN) -- (E1);
\draw[darrow] (gOSN) -- (E2);
\draw[dotarrow] (E1) -- (EE);
\draw[dotarrow] (E2) -- (EE);
\end{tikzpicture}
\caption{Relay order between the six equilibria ($\omega>0$).
Existence conditions appear above each node and extra stability conditions below.
Solid arrow (OSND$\to$gOSN): full relay.
Dashed arrows: multiple relay s (existence at head $\Rightarrow$ instability at tail).
Dotted arrows: predicted multiple relay s (pending direct verification, marked $\dagger$).
The gOSN/RFE split (dashed separator) is governed by $R_0^W=1$.
Unlike the $\omega=0$ case (Figure~\ref{fig:relay0}), there are no gOSN-branch endemic
equilibria with $W=0$: the $R\to W$ coupling forces $W>0$ at $E_1$, $E_2$, EE.
Relays from RFE are analogous but not yet fully analysed.}
\label{fig:relay}
\end{figure}

%------------------------------------------------------------------
\ssec{Direct LAS analysis for the rational equilibria}\lbl{s:LAS}

We study the LAS of gOSN and RFE directly via the Routh--Hurwitz conditions applied
to the characteristic polynomial at each rational equilibrium (script \texttt{OSN.wl},
function \texttt{staPP}).

\begin{enumerate}

\item \textbf{gOSN}: $S_1=S_2=B_1=B_2=W=R=0$,
  $\hat x_1 = \frac{\mu_n}{\beta}$, $\hat U = \frac{\mu(R_0-1)}{\beta}$.
  gOSN exists precisely when $R_0>1$.

  The characteristic polynomial at gOSN factors into two linear factors and three
  quadratic ones.  The stability condition from the non-trivial linear factor is
  $\hat x_1 > \tilde x_1$, i.e.\
  \begin{equation}\label{eq:gOSN-W}
    R_0 \;<\; 1 + \frac{\beta}{\beta_w}
    \quad\Bigl(\Leftrightarrow\; R_0^W < 1\Bigr),
  \end{equation}
  which is equivalent to RFE not existing.
  The stability conditions from the two non-trivial quadratic factors are:
  \begin{equation}\label{eq:gOSN}
    1 \;>\; R_{\mathrm{gOSN}}
    \;=\; \max\!\Bigl[R_1(\mathrm{gOSN}),\,R_2(\mathrm{gOSN})\Bigr],
  \end{equation}
  where $R_j(\mathrm{gOSN}) = \frac{\beta_j \hat U}{\mu_j(1+\alpha_j \hat U)}$
  are the invasion numbers of the two rumors at gOSN.
  Thus gOSN is LAS iff both \eqref{eq:gOSN-W} and \eqref{eq:gOSN} hold.

  \beR
  Note that $\hat x_1 < \tilde x_1$ (RFE exists) implies gOSN cannot be LAS,
  since one of the linear eigenvalues becomes positive.
  The threshold $\hat x_1 = \tilde x_1$ can also be written as $R_0^W = 1$, or
  equivalently as $R(\mathrm{RFE}) = 1$ (the reproduction number at RFE equals one).
  \eeR

\item \textbf{RFE}: $S_1=S_2=B_1=B_2=R=0$, $W>0$,
  \begin{equation}\label{eq:x1}
    \tilde x_1 \;=\; x_d\,\frac{\beta_w}{\beta_w+\beta},
    \qquad
    \tilde U \;=\; \frac{\mu}{\beta_w},
    \qquad
    \tilde W \;=\; \frac{\beta(\tilde x_1 - \hat x_1)}{\beta_w} > 0
    \;\Leftrightarrow\;
    R_0 > 1 + \frac{\beta}{\beta_w}.
  \end{equation}
  The characteristic polynomial at RFE factors into a trivial linear factor,
  a cubic factor that is always Hurwitz when $R_0>1$ (verified directly), and
  two quadratic factors whose stability requires:
  \begin{equation}\label{eq:RFE}
    1 \;>\; R_{\mathrm{RFE}}
    \;=\; \max\!\Bigl[R_1(\mathrm{RFE}),\,R_2(\mathrm{RFE})\Bigr],
  \end{equation}
  where $R_j(\mathrm{RFE}) = \frac{\beta_j \tilde U}{\mu_j(1+\alpha_j \tilde U)}$
  are the invasion numbers of the two rumors at RFE.

\item \textbf{$E_1$, $E_2$, EE}: these equilibria carry $W,R>0$ and are irrational
  (they depend on $f_j$).  At $E_1$, the characteristic polynomial factors into a
  quadratic and a degree-6 polynomial; a complete Routh--Hurwitz analysis
  is deferred to future work.  Table~\ref{tab:OSN_equilibria_om} lists the stability
  conditions predicted by Theorem~\ref{thm:relay-abstract}.

\end{enumerate}

\ssec{Alternative derivation of fixed-point coordinates}\lbl{s:altcoords}

\noindent\emph{(This subsection gives the detailed algebraic derivation underlying
Section~\ref{s:coordsomP}; it may be skipped on first reading.)}

Finding RUR polynomial equations requires first choosing the ``keepVar'' variable to be
kept, and the equation which will be dropped.  In \texttt{clsEq} we chose the ad-hoc
solution of dropping the first non-DFE variable and its equation.  However,
the existence problem requires further human intervention, because the RUR equilibria
might require also that the polynomial equations admit roots in certain intervals.
Due to these complications, we turn now to a detailed direct derivation.
First, the factoring  of the second equation in the fixed point system
$$\bff 0=\left(
\begin{array}{c}
 \La -\mu x_1-\alpha x_1 x_2 \\
 -x_2 \left(\mu -\beta  x_1+\beta _w W\right) \\
 \beta _w x_2 W-\mu  W+R\omega  \\
 \frac{\beta _1 B_1 x_2}{B_1 \epsilon _1+\alpha _1 x_2+1}-\gamma _1 S_1 \\
 \frac{\beta _2 B_2 x_2}{B_2 \epsilon _2+\alpha _2 x_2+1}-\gamma _2 S_2 \\
 \gamma _1 S_1-B_1 \mu _1 \\
 \gamma _2 S_2-B_2 \mu _2 \\
 B_1 \mu _1+B_2 \mu _2-\omega R \\
\end{array}
\right)$$   leads on one hand  to OSND, and on the other hand to the  system with second equation simplified by
$x_2$:
$$\bff 0=\left(
\begin{array}{c}
 \La -\mu x_1-\beta x_1 x_2 \\
 \mu - \beta x_1+\beta _w W \\
 \beta _w x_2 W-\mu  W+R\omega  \\
 \frac{\beta _1 B_1 x_2}{B_1 \epsilon _1+\alpha _1 x_2+1}-\gamma _1 S_1 \\
 \frac{\beta _2 B_2 x_2}{B_2 \epsilon _2+\alpha _2 x_2+1}-\gamma _2 S_2 \\
 \gamma _1 S_1-B_1 \mu _1 \\
 \gamma _2 S_2-B_2 \mu _2 \\
 B_1 \mu _1+B_2 \mu _2-\omega R \\
\end{array}
\right),$$
where equations $2,6,7,8$ are linear.
\BEN

\im
  We may eliminate now from equations $2,3,6,7,8$ the corresponding variables
$x_2,W,B_1,B_2,R$,  in function of $x_1,S_1,S_2 $. This yields the \rat:
\BEN \im
\be{el} \bep
%x_2\to \frac{  +\mu ^2-\alpha  \mu  x_1+\gamma _1 \beta _w S_1+\gamma _2 \beta _w S_2}{\beta _w \left(\mu -\beta x_1\right)}
W\to \frac{\beta  x_1-\mu _n}{\beta _w}=\frac{\beta  (x_1- \H x_1)}{\beta _w}
\\B_1\to \frac{\gamma _1 S_1}{\mu_1}
\\B_2\to \frac{\gamma _2 S_2}{\mu_2}
\\R\to \frac{\gamma _1 S_1+\gamma _2 S_2}{\omega }.\eep
\ee
\im  \be{x2} x_2\to \frac{\mu }{\beta _w}+\frac{\gamma _1 S_1+\gamma _2 S_2}{\mu _n-\beta  x_1}\ee
\EEN
 The positivity of
$B_1,B_2,R$ holds whenever $S_1,S_2$ are positive, that
 of $W$ requires also that $   x_1- \H x_1 >0$, and finally that
 of $x_2$ imposes also
 \be{x2p}   x_1- \H x_1 >x_c:=\fr{\beta _w(\ga_1 S_1 + \ga_2 S_2)}{\mu \beta}.\ee

\im A pleasant surprise  is that after eliminating  the variables
$ W,B_1,B_2,R$, the equations for $S_1,S_2$ factor:
\be{45} \bep
\gamma _1 S_1 \left(\frac{\beta _1 x_2}{\mu_1+\alpha _1 \mu_1 x_2+\gamma _1 S_1 \epsilon _1}-1\right)=0\\\gamma _2 S_2 \left(\frac{\beta _2 x_2}{\mu_2+\alpha _2 \mu_2 x_2+\gamma _2 S_2 \epsilon _2}-1\right)=0
\eep
\ee

 After eliminating also $x_2$,  the four  cases yield \resp:
 \be{4c} \bc (S_1,S_2)\to (0,0) &\text{RFE,gOSN}\\
 (S_1,S_2)\to (\frac{\mu(\mR  x_1  -1) \left(\beta _1 \mu -\mu_1 \left(\alpha _1 \mu +\beta _w\right)\right)}{\gamma _1 \beta _w \left(-\alpha _1 \mu_1+\beta _1+\beta x_1 \epsilon _1-\mu  \epsilon _1\right)}, 0) &\text{E1}\\
  (S_1,S_2)\to (0,\frac{\mu(\mR  x_1  -1) \left(\beta _2 \mu -\mu_2 \left(\alpha _2 \mu +\beta _w\right)\right)}{\gamma _2 \beta _w \left(-\alpha _2 \mu_2+\beta _2+\beta x_1 \epsilon _2-\mu  \epsilon _2\right)})&\text{E2} \\
 (S_1,S_2)\to (\frac{\beta _1 \mu_2 \beta _w-\left(\mu_1 \beta _w \left(\alpha _1 \mu_2-\alpha _2 \mu_2+\beta _2+\beta x_1 \epsilon _2-\mu  \epsilon _2\right)\right)+\mu  \left(-\epsilon _2\right) \left(\beta _1-\alpha _1 \mu_1\right) \left(\mu -\beta x_1\right)}{\gamma _1 \beta _w \left(-\alpha _2 \mu_2 \epsilon _1+\beta _2 \epsilon _1+\epsilon _2 \left(-\alpha _1 \mu_1+\beta _1+\beta x_1 \epsilon _1-\mu  \epsilon _1\right)\right)},\\ \frac{\beta _2 \mu_1 \beta _w-\mu_2 \beta _w \left(-\alpha _1 \mu_1+\alpha _2 \mu_1+\beta _1+\beta x_1 \epsilon _1-\mu  \epsilon _1\right)+\mu  \left(-\epsilon _1\right) \left(\beta _2-\alpha _2 \mu_2\right) \left(\mu -\beta x_1\right)}{\gamma _2 \beta _w \left(-\alpha _2 \mu_2 \epsilon _1+\beta _2 \epsilon _1+\epsilon _2 \left(-\alpha _1 \mu_1+\beta _1+\beta x_1 \epsilon _1-\mu  \epsilon _1\right)\right)}) &\text{EE}\ec
\ee

\im   Eliminating  $ S_1,S_2$, in each case, and plugging
 in the first equation leads to the following
equations for $x_1$:
\be{x1} \bc \f(x_1) -
\frac{\beta  \mu  x_1 \left( \mu -\beta x_1\right)}{\beta _w \left(\mu -\beta x_1\right)}
=\Lambda -\mu  x_1(1+\frac{\beta   }{\beta _w })=0\\
\f(x_1)+\frac{\beta x_1 \left(\mu  \epsilon _1 \left( \mu -\beta x_1\right)-\mu_1 \beta _w\right)}{\beta _w \left(-\alpha _1 \mu_1+\beta _1+\beta x_1 \epsilon _1-\mu  \epsilon _1\right)}=0
\\
\f(x_1)+\frac{\beta x_1 \left(\mu  \epsilon _2 \left( \mu -\beta x_1\right)-\mu_2 \beta _w\right)}{\beta _w \left(-\alpha _2 \mu_2+\beta _2+\beta x_1 \epsilon _2-\mu  \epsilon _2\right)}=0
\\
\f(x_1)+\frac{\beta x_1 \left(\mu  \epsilon _1 \epsilon _2 \left( \mu -
\beta x_1\right)-\beta _w \left(\mu_2 \epsilon _1+\mu_1 \epsilon _2\right)\right)}{\beta _w \left(-\alpha _2 \mu_2 \epsilon _1+\beta _2 \epsilon _1+\epsilon _2 \left(-\alpha _1 \mu_1+\beta _1+\beta x_1 \epsilon _1-\mu  \epsilon _1\right)\right)}=0
\ec
\ee
for the four cases in \eqr{4c}, \resp. When $\f(x_1)=\Lambda -\mu  x_1$, the
numerators are of degree $1,2,2,2$
in $x_1$.
\EEN
\ssec{Existence conditions for $E_1$ and $E_2$}

\noindent\emph{(This subsection gives the detailed verification that $R_j(\mathrm{gOSN})>1$ is
the exact existence condition; it may be skipped on first reading.)}

 The second  case in \eqr{4c} shows that at $E_1$   we need also $   x_1- \H x_1 >0$, so
 we perform the substitution $   x_1=y + \H x_1 .$ The coefficients of the resulting
  equation $A y^2 + B y + C$:

 $$\bc C=  -\beta _w \left(-\alpha _1 \beta  \Lambda  \mu _1+\beta  \beta _1 \Lambda +\alpha _1 \mu  \mu _1 \mu _n-\beta _1 \mu  \mu _n-\beta  \mu _1 \mu _n\right)\\
 B=\beta  \left(\beta  \beta _w \left(\mu _1-\Lambda  \epsilon _1\right)+\beta _w \mu  \left(-\alpha _1 \mu _1+\beta _1+\epsilon _1 \mu _n\right)+\beta  \mu  \epsilon _1 \mu _n\right)
  \\A=\beta ^2 \left(\beta +\beta _w\right) \mu  \epsilon _1
   \ec$$
  and there is exactly one \nne\ root,
 provided that  $$(\xd -\H  x_1)(\beta _1-\alpha _1 \mu _1)>   \mu _1 \fr{\mu_n}{ \mu
   } \Lra $$   \be{mys}(\beta _1-\alpha _1 \mu_1)\fr{ \mu
   }{\mu_n}(x_0 -\H x_1)=(\beta _1-\alpha _1 \mu_1)\H x_2>\mu _1 \Eq
   R_1(gOSN) >1
 \ee
 (recall that $\hat{x}_2=\fr{ \mu
   }{\mu_n}(x_0 -\H x_1)$).

 Now the second inequality for the positivity of $x_2$,
 $$x_2=\frac{\beta  \left(x_1-\hat{x}_1\right) \left(\mu _1 \tilde{x}_1+\mu
  \left(x_1-\hat{x}_1\right) \epsilon _1 \left(x_0-\tilde{x}_1\right)\right)}
  {\mu  \left(x_0-\tilde{x}_1\right)
 \left(\beta _1-\alpha _1 \mu _1+\beta  \left(x_1-\hat{x}_1\right) \epsilon _1\right)}
 >0$$
 may be shown to hold always, so we may conclude that the existence domain of $E_1$ is precisely $R_1(gOSN) >1,$  and hence E1 is a second order multiple relay  of gOSN,
 with a   similar conclusion for E2.
 \iffalse
 split in two cases, according to the sign of $x_0-\tilde{x}_1$.
 When this is positive, the inequality is already implied by previous conditions,
 but in the
 opposite case we must have also
 $$\left(x_1-\hat{x}_1\right)>\frac{\mu _1 \tilde{x}_1}
 { \mu \epsilon _1 \left(\tilde{x}_1-x_0\right)},
 $$
so a new shift $   x_1=y + \H x_1 +\frac{\mu _1 }
 { \mu \epsilon _1 \left(1-x_0/\tilde{x}_1\right)}$ is required.

\fi

%

\section{Can the $\omega=0$ model oscillate? An impossibility result}
\lbl{s:Hopf}

The standard route to establishing (or ruling out) periodic solutions for a positive ODE proceeds
in three steps.
\BEN
\im \textbf{Structural screening.} Use the Cauchy--Binet oscillation recipes (Recipe~I and Recipe~II
of \cite{BSV,AABH25}) implemented in \texttt{EpidCRN} as \texttt{oscRI} and \texttt{oscRII}.
These give purely structural \emph{necessary} conditions for Hopf-type instability; if neither
recipe is triggered, periodic orbits are structurally impossible.

\im \textbf{Local analysis.} For each equilibrium $E^*$, study the characteristic polynomial
$p(\lambda)=\det(J(E^*)-\lambda I)$; determine whether it can admit a pair of purely imaginary
roots $\pm i\omega$ as a parameter is varied.

\im \textbf{Lyapunov-coefficient computation.}  If the previous step yields a candidate Hopf
point, compute the first Lyapunov coefficient $\ell_1$ to determine supercritical ($\ell_1<0$,
stable limit cycle) or subcritical ($\ell_1>0$, unstable limit cycle) character.
\EEN

We carry out steps 1 and 2 below and find that Hopf bifurcation is \emph{structurally impossible}
for the $\omega=0$ model, so step 3 is moot.

\subsubsection*{Block lower-triangular structure of the Jacobian}

The $\omega=0$ model is the 7-dimensional reduction of \eqr{AOSN} obtained by dropping $R$
(since $R'=\mu_1 B_1+\mu_2 B_2\ge 0$ and $R$ does not appear in any other equation when $\omega=0$).

The key structural observation is that $S_1,S_2,B_1,B_2$ do \emph{not appear} in the equations
for $x_1,U,W$: the rumor-believers can only \emph{consume} new users at rate $f_j$, but in this
model the catalytic reaction
$$B_j + U \;\to\; B_j + U + S_j$$
leaves both $U$ and $B_j$ \emph{unchanged} (net stoichiometry zero for those species).
Consequently, the platform dynamics $(x_1,U,W)$ are autonomous, and the strains are
``slave'' variables driven by $U(t)$.

\beT[Three blocks lower-triangular structure when omega=0]\label{t:blt}
The Jacobian $J(x)$ of the $\omega=0$ reduction of \eqr{AOSN} has block lower-triangular form
\be{blt}
J =
\begin{pmatrix}
J_P & 0 & 0 \\
*   & J_{1} & 0 \\
*   & 0 & J_{2}
\end{pmatrix}
\ee
at every point $x\in\mathbb{R}^7_{\ge 0}$, where
$J_P = D_{(x_1,U,W)}(x_1',U',W')$ is the $3\times3$ platform block, and
$$J_j = \begin{pmatrix}-\gamma_j & \partial_{B_j}f_j \\ \gamma_j & -\mu_j\end{pmatrix}$$
is the $2\times2$ strain-$j$ block ($j=1,2$).
\eeT

\begin{proof}
Inspecting \eqr{AOSN} with $\omega=0$: the right-hand sides of the $x_1',U',W'$ equations are independent of
$S_1,S_2,B_1,B_2$.  Hence the corresponding columns of $J$ are zero in the first three rows,
giving the zero upper-right $3\times4$ block in \eqr{blt}.  The independence of strain-1
variables from strain-2 variables and vice versa gives the zero off-diagonal $2\times2$ blocks
between $J_1$ and $J_2$.
\end{proof}

\beR
The eigenvalues of $J(E^*)$ at any equilibrium $E^*$ are the \emph{union} of the spectra of
$J_P(E^*)$, $J_1(E^*)$, and $J_2(E^*)$.  It therefore suffices to analyse each $3\times3$ (or
$2\times2$) block separately.
\eeR

\subsubsection*{Analysis of the strain blocks}

\beP[No Hopf from strain blocks]\label{p:noHopf_str}
The characteristic polynomial of $J_j$ is
$$p_j(\lambda) = \lambda^2 + (\gamma_j+\mu_j)\,\lambda + \gamma_j\bigl(\mu_j - \partial_{B_j}f_j\bigr).$$
Since $\gamma_j,\mu_j>0$, the trace equals $-(\gamma_j+\mu_j)<0$ everywhere.  A pair of purely
imaginary eigenvalues $\pm i\omega$ requires $\operatorname{tr}(J_j)=0$, which is impossible.
Hence $J_j$ can never have purely imaginary eigenvalues; in particular, Hopf bifurcation cannot
originate in the strain variables.
\eeP

\subsubsection*{Analysis of the platform block}

The platform block $J_P$ takes two forms depending on whether $W=0$ or $W>0$.

\medskip\noindent
\textbf{Case $W=0$ (gOSN branch).}  At any equilibrium with $W=0$, the $W$-equation decouples
and $J_P$ reduces to the $2\times 2$ matrix
$$J_P\big|_{W=0} = \begin{pmatrix}-\mu-\beta U & -\beta x_1\\ \beta U & \beta x_1-\mu_n\end{pmatrix}.$$
At an equilibrium on the gOSN branch, $\beta x_1 = \mu_n$, so the $(2,2)$ entry vanishes and
$$J_P\big|_{\text{gOSN}} = \begin{pmatrix}-\mu-\beta\Hat U & -\beta\Hat x_1\\ \beta\Hat U & 0\end{pmatrix},$$
with characteristic polynomial $\lambda^2+(\mu+\beta\Hat U)\lambda+\beta^2\Hat x_1\Hat U$.
Both coefficients are strictly positive (since $\Hat x_1,\Hat U>0$), so
$\operatorname{tr}(J_P)=-(\mu+\beta\Hat U)<0$ and $\det(J_P)=\beta^2\Hat x_1\Hat U>0$.
Purely imaginary eigenvalues would require trace zero: impossible.

\medskip\noindent
\textbf{Case $W>0$ (RFE branch).}  At any equilibrium with $W>0$, the equilibrium conditions
$\beta_w U = \mu$ (from $W'=0$) and $\beta x_1 = \mu_n + \beta_w W$ (from $U'=0$) give
$$J_P\big|_{\text{RFE}} = \begin{pmatrix}-\mu-\beta U & -\beta x_1 & 0\\ \beta U & 0 & -\beta_w U\\ 0 & \beta_w W & 0\end{pmatrix}.$$
The characteristic polynomial is
\be{pP}
p_P(\lambda) = \lambda^3 + (\mu+\beta U)\,\lambda^2
   + \bigl(\beta_w^2 U W + \beta^2 x_1 U\bigr)\,\lambda
   + (\mu+\beta U)\beta_w^2 U W.
\ee
\beP[Hurwitz stability of $J_P$ on the RFE branch]\label{p:hur3}
All coefficients in \eqr{pP} are strictly positive, and the Routh--Hurwitz criterion for
degree $3$, namely $a_1 a_2 > a_3$, holds:
$$(\mu+\beta U)\bigl(\beta_w^2 U W+\beta^2 x_1 U\bigr) > (\mu+\beta U)\beta_w^2 U W,$$
since $\beta^2 x_1 U>0$.  Hence all three eigenvalues of $J_P$ have strictly negative real
parts; purely imaginary eigenvalues are impossible.
\eeP

\subsubsection*{Main result}

\beT[Hopf impossibility when omega vanishes]\label{t:noHopf}
The $\omega=0$ reduction of \eqr{AOSN} cannot undergo a Hopf bifurcation at any equilibrium
$E^*\in\mathbb{R}^7_{\ge 0}$ and for any positive parameter values.
\eeT

\begin{proof}
By Theorem~\ref{t:blt}, the eigenvalues of $J(E^*)$ are the union of the spectra of $J_P(E^*)$,
$J_1(E^*)$, $J_2(E^*)$.  A necessary condition for Hopf bifurcation is the existence of a pair
of purely imaginary eigenvalues of $J(E^*)$.  Proposition~\ref{p:noHopf_str} rules this out for
$J_1$ and $J_2$ (their traces are always strictly negative).  The $W=0$ case rules it out for
$J_P$ by the same trace argument, and the $W>0$ case is covered by
Proposition~\ref{p:hur3}.
\end{proof}

\beR
Theorem~\ref{t:noHopf} is a purely \emph{structural} result: it does not depend on specific
parameter values, only on the sign structure of \eqr{AOSN} with $\omega=0$.  The key mechanism is
the absence of feedback from the strain compartments $(S_j,B_j)$ to the platform compartments
$(x_1,U,W)$: rumor spread cannot destabilize the platform dynamics in an oscillatory fashion.
This is in contrast to standard SEIR-type models where infectives reduce the susceptible pool
and feedback is present.
\eeR

\beR[Connection to the BSV oscillation recipes]
The structural impossibility established above is also reflected at the level of the Cauchy--Binet
oscillation recipes.  Recipe~I \cite{BSV} requires a negative oriented cycle in the species
graph of the linearisation; Recipe~II requires a suitable Jacobian minor to change sign.
Both are preempted by the block lower-triangular structure \eqr{blt}: no negative cycle
can cross the boundary between the platform and strain sub-networks.
The EpidCRN commands \texttt{oscRI[RN,rts,va7]} and \texttt{oscRII[RN,rts,va7]}  (where
\texttt{va7} is the 7-dim variable list) should return empty core sets, confirming
Theorem~\ref{t:noHopf} computationally.
\eeR

\subsubsection*{What modifications could produce oscillations?}

Theorem~\ref{t:noHopf} identifies the specific structural feature that rules out Hopf: the
one-way coupling from platform to strains.  Oscillations could in principle be generated by any
of the following modifications:
\BEN
\im \textbf{Feedback from believers to the platform.} If a large number of believers $B_j$
accelerates the departure from the platform (\ie\ increases the rate $U\to W$), the
coupling becomes bidirectional and Hopf becomes structurally possible.  Concretely, replacing
the abandonment rate $\beta_w U W$ by $(\beta_w W + \delta B_1 + \delta B_2) U$ introduces
non-zero $(2,6)$ and $(2,7)$ entries in $J$, breaking the block structure.

\im \textbf{Re-infection / rumor revival.} If former skeptics $R$ can re-enter the susceptible
pool $S_j$ (a form of waning immunity to rumor), the $R$ compartment is coupled back to $S_j$,
potentially creating a negative feedback loop long enough to sustain oscillations.

\im \textbf{Non-zero $\omega$.}  Restoring $\omega>0$ couples $R$ back to $W$, which then
acts on $U$; the platform dynamics become 3-dimensional with a feedback loop
$U\to B_j\to R\to W\to U$, and the Jacobian is no longer block lower-triangular.
This is why the $\omega>0$ model is a natural candidate for oscillatory behavior, and
the Hopf analysis there remains open.
\EEN
\subsection{Beyond transcritical relays: Hopf and Bogdanov--Takens relays}

Up to this point, all relays considered in this paper are \emph{transcritical relays}:
the loss of stability of a resident equilibrium on a siphon face is caused by a
simple zero eigenvalue of a transversal Jacobian block, and the successor equilibrium
appears on an adjacent face.
This mechanism is generic for ME--type siphons, since transversal blocks are Metzler
and their dominant eigenvalue is real.

It is natural to ask whether other bifurcation types can generate relays along the
siphon lattice.
In this subsection we introduce two further notions---Hopf relays and
Bogdanov--Takens relays---and clarify precisely which structural properties must fail
for them to occur.

\subsubsection{Hopf relays}

\beD[Hopf relay]\label{def:hopf-relay}
Let $(\Sigma',\Sigma)$ be a distance-one cover in the siphon lattice
(Theorem~\ref{thm:relay-abstract}) with invader block $\sigma=\Sigma\setminus\Sigma'$,
and let $E^*\in\mathcal{F}_\Sigma$ be an equilibrium that is asymptotically stable
relative to $\mathcal{F}_\Sigma$.
A \emph{Hopf relay} along this cover occurs if, as a parameter $\theta$ varies,
\begin{itemize}
\item the transversal Jacobian block $M_\sigma(E^*,\theta)$
  (Definition~\ref{d:transblock}) has a simple pair of
  complex conjugate eigenvalues crossing the imaginary axis with nonzero speed;
\item all remaining eigenvalues of $Df(E^*,\theta)$ have strictly negative real parts;
\item the bifurcation produces a stable periodic orbit whose support intersects
  the relay-successor face $\mathcal{F}_{\Sigma'}$ and leaves $\mathcal{F}_\Sigma$.
\end{itemize}
\eeD

\paragraph{Structural obstruction for ME--type siphons.}
For ME--type siphons, transversal blocks $M_\sigma$ are Metzler.
Hence their dominant eigenvalue is real and cannot form a complex conjugate pair.
Therefore:

\begin{quote}
\emph{Hopf relays are impossible for purely ME--type siphons.}
\end{quote}

To allow a Hopf relay, at least one of the following must occur:
\begin{itemize}
\item the invader block $\sigma$ has dimension $\ge 2$ and includes \emph{non-Metzler}
  couplings (e.g.\ sign-changing feedback);
\item the transversal dynamics is not order preserving (e.g.\ due to behavioral or
  resource feedbacks);
\item delays or higher-order effects are introduced.
\end{itemize}

\paragraph{Equation-level difference from transcritical relays.}
In contrast to the transcritical case, where the transversal dynamics factorizes as
\[
\dot x_\sigma = M_\sigma(\bar x^*,\theta)\,x_\sigma + O(\|x_\sigma\|^2),
\]
with $M_\sigma$ Metzler, a Hopf relay requires a transversal block of the form
\[
\dot x_\sigma =
\begin{pmatrix}
a(\theta) & -\omega_0(\theta)\\
\omega_0(\theta) & a(\theta)
\end{pmatrix}x_\sigma
+ O(\|x_\sigma\|^2),
\]
or a higher-dimensional analogue, with $\omega_0(\theta)\neq 0$ at criticality.
This structure is incompatible with standard ME--type kinetics.

\subsubsection{Bogdanov--Takens relays}

\beD[Bogdanov--Takens relay]\label{def:bt-relay}
Let $(\Sigma',\Sigma)$ and $E^*\in\mathcal{F}_\Sigma$ be as above.
A \emph{Bogdanov--Takens (BT) relay} occurs if, for some parameter value $\theta=\theta_*$,
the transversal block $M_\sigma(E^*,\theta_*)$ has a double zero eigenvalue with a
one-dimensional Jordan block, while all other eigenvalues of $Df(E^*,\theta_*)$ have
negative real parts.
The resulting codimension--two bifurcation generates a relay involving equilibria
and/or periodic orbits on the relay-successor face $\mathcal{F}_{\Sigma'}$.
\eeD

\paragraph{Interpretation in the siphon lattice.}
A BT relay corresponds to a \emph{degenerate relay point} where:
\begin{itemize}
\item the invasion number satisfies $R_\sigma(E^*)=1$, and
\item its derivative along the resident manifold vanishes.
\end{itemize}
In other words, the usual first-order invasion inequality is insufficient, and
second-order terms in the invasion number become decisive.

\paragraph{Equation-level difference from transcritical relays.}
At the level of the transversal equations, a BT relay requires
\[
\dot x_\sigma = M_\sigma(\theta)\,x_\sigma + O(\|x_\sigma\|^2),
\qquad
M_\sigma(\theta_*)=
\begin{pmatrix}
0 & 1\\
0 & 0
\end{pmatrix}
\quad\text{(in suitable coordinates),}
\]
which again is impossible for generic Metzler matrices with strictly positive
off-diagonal entries.
Thus BT relays can occur only if:
\begin{itemize}
\item the invader block $\sigma$ has dimension at least two;
\item the regular splitting degenerates (loss of $M$--matrix structure);
\item additional symmetries or constraints force a higher-order contact.
\end{itemize}

\subsubsection{Comparison with transcritical relays}

\begin{center}
\begin{tabular}{|c|c|c|c|}
\hline
Relay type & Eigenvalue at $E^*$ & Generic for ME--type? & Typical outcome\\
\hline
Transcritical & Simple $\lambda=0$ & Yes & Successor equilibrium on $\mathcal{F}_{\Sigma'}$\\
Hopf & $\pm i\omega_0$ & No & Periodic orbit on $\mathcal{F}_{\Sigma'}$\\
Bogdanov--Takens & Double $\lambda=0$ & No & Equilibrium + cycle on $\mathcal{F}_{\Sigma'}$\\
\hline
\end{tabular}
\end{center}

\paragraph{Conclusion.}
Transcritical relays are structurally enforced by the order-preserving,
Metzler nature of ME--type siphons and therefore dominate the relay structure of
models such as OSN.
Hopf and Bogdanov--Takens relays require explicit violations of this structure and
should be regarded as nongeneric extensions rather than competitors of the
transcritical relay mechanism.
\section{Rank--one perturbation analysis of the symbolic Jacobian via the matrix determinant lemma}\lbl{s:R1}

In this section we apply the rank--one perturbation result to a candidate equilibrium
(e.g.\ gOSN, RFE, or an endemic equilibrium) at which the open--loop matrix $A$ is Hurwitz.
The parameter $\omega>0$ enters the Jacobian as a single rank--one feedback, and the theorem
below gives an explicit sufficient condition for Hurwitzness of the full Jacobian $J$.

Let $\mathcal F:=\{x=0\}$ be the disease--free face, and let $(0,y^\ast)\in\mathcal F$
be a disease--free equilibrium, where
\[
x=(x_1,U,W), \qquad
y=(S_1,S_2,B_1,B_2,R),
\]
and put $z=(x,y)$.

\iffalse
and the associated Sherman--Morrison formula
\[
(A+uv^\top)^{-1}
=
A^{-1}
-
\frac{A^{-1}u\,v^\top A^{-1}}{1+v^\top A^{-1}u},
\]
valid whenever $1+v^\top A^{-1}u\neq0$, which provide  explicit characterizations of how a
rank--one update affects both the spectrum and the resolvent of a matrix.
\fi

The following result is standard in the control literature-- for example see \cite{Gohberg,ZDG,FarRin}.

\beT[Stability of rank one perturbations]
\label{thm:rankone-detlemma}

Let $A\in\mathbb R^{n\times n}$ be a real matrix, $u,v\in\mathbb R^n$, and
$\kappa\in\mathbb R$. Define
\[
J := A + \kappa\, u v^\top .
\]

Assume that $A$ is Hurwitz. Define the scalar function
\[
g(\lambda):=\kappa\, v^\top(\lambda I-A)^{-1}u,
\qquad \Re\lambda\ge0,
\]
which is analytic on $\{\Re\lambda\ge0\}$ (since $A$ is Hurwitz, $\lambda I-A$ is invertible for all $\Re\lambda\ge0$).
If
\begin{equation}\label{eq:smallgain}
\sup_{\Re\lambda\ge0}|g(\lambda)|<1,
\end{equation}
then $J$ is Hurwitz.
\eeT

\begin{proof}
 Applying the matrix
determinant lemma  for rank-one updates
$
\det(B+uv^\top)=\det(B)\bigl(1+v^\top B^{-1}u\bigr)
$ (see \cite{HornJohnson}) with $B=\lambda I-A$
  yields, for every $\lambda\in\mathbb C$ such that $\lambda I-A$ is invertible,
the  factorization
\begin{equation}\label{eq:detlemma}
\det(\lambda I-J)
=
\det(\lambda I-A)\,
\Bigl(1-\kappa\, v^\top(\lambda I-A)^{-1}u\Bigr).
\end{equation}

Hence, eigenvalues of $J$ consist of those of $A$ together with the solutions of the
scalar equation
\begin{equation}\label{eq:scalarchar}
1-\kappa\, v^\top(\lambda I-A)^{-1}u=0.
\end{equation}

Since $A$ is Hurwitz, $\det(\lambda I-A)\neq0$ for all $\Re\lambda\ge0$.
By \eqref{eq:detlemma}, zeros of $\det(\lambda I-J)$ in $\Re\lambda\ge0$ can therefore
occur only if \eqref{eq:scalarchar} holds.
Condition \eqref{eq:smallgain} implies $|1-g(\lambda)|\ge1-|g(\lambda)|>0$ for all
$\Re\lambda\ge0$, hence \eqref{eq:scalarchar} has no solution in the closed right
half--plane. Thus $J$ has no eigenvalues with $\Re\lambda\ge0$ and is Hurwitz.
\end{proof}

\paragraph{Application to the OSN Jacobian: removal of the $(3,8)$ edge.}
We apply Theorem~\ref{thm:rankone-detlemma} with $\kappa=\omega$ (the model rate
parameter), $u=e_3$, and $v=e_8$ to the OSN Jacobian.
The $(3,8)$ entry $\omega$ in $J$ is the $R\to W$ feedback coupling introduced
by $\omega>0$: it is precisely the coupling identified in Section~\ref{s:Hopf}
(item~3 of the discussion of oscillation-generating modifications) as the structural
feature that breaks the block lower-triangular form and opens the door to oscillations
for $\omega>0$.  Removing it (i.e.\ setting $\kappa=0$ in $J=A+\omega e_3e_8^\top$)
restores the block lower-triangular open-loop matrix $A$, which is Hurwitz whenever
the resident equilibrium is stable in the $\omega=0$ reduction.
The OSN Jacobian is
\[
J=
\left(
\begin{array}{cccccccc}
 -\mu -\beta  U & -\beta  x_1 & 0 & 0 & 0 & 0 & 0 & 0 \\
 \beta  U & -\mu _8-W \beta _w+\beta  x_1 & -U \beta _w & 0 & 0 & 0 & 0 & 0 \\
 0 & W \beta _w & U \beta _w-\mu  & 0 & 0 & 0 & 0 & \omega \\
 0 & \mathrm{f1}_x & 0 & -\gamma _1 & 0 & \mathrm{f1}_B & 0 & 0 \\
 0 & \mathrm{f2}_x & 0 & 0 & -\gamma _2 & 0 & \mathrm{f2}_B & 0 \\
 0 & 0 & 0 & \gamma _1 & 0 & -\mu _1 & 0 & 0 \\
 0 & 0 & 0 & 0 & \gamma _2 & 0 & -\mu _2 & 0 \\
 0 & 0 & 0 & 0 & 0 & \mu _1 & \mu _2 & -\omega
\end{array}
\right).
\]

\begin{corollary}[OSN rank--one perturbation with explicit gain bound]
\label{cor:OSN-resolvent}

Consider the OSN Jacobian written as a rank--one feedback
\[
J = A + \omega\, e_3 e_8^\top,
\]
where $A:=J-\omega e_3 e_8^\top$ is an ``open--loop" Jacobian obtained by removing  the
$(3,8)$ edge and retaining the diagonal  $-\omega$.
Assume that $A$ is Metzler and Hurwitz at an equilibrium under consideration.  Then, \begin{equation}\label{eq:OSN-explicit}
|\omega|\;\bigl(-A^{-1}\bigr)_{8,3}<1.
\end{equation}
is a
sufficient condition for Hurwitzness of $J$.

\end{corollary}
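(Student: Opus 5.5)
Corollary~\ref{cor:OSN-resolvent} follows from Theorem~\ref{thm:rankone-detlemma} with $\kappa=\omega$, $u=e_3$, $v=e_8$. The gain is
\[
g(\lambda)=\omega\, e_8^\top(\lambda I-A)^{-1}e_3=\omega\,\bigl[(\lambda I-A)^{-1}\bigr]_{8,3}.
\]
It therefore suffices to show $\sup_{\Re\lambda\ge0}|g(\lambda)|\le|\omega|\,(-A^{-1})_{8,3}$. Under \eqref{eq:OSN-explicit} the small-gain condition \eqref{eq:smallgain} then holds, and $J$ is Hurwitz.

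The key step is a positivity bound on the resolvent of a Hurwitz Metzler matrix. Since $A$ is Metzler and Hurwitz, we have for $\Re\lambda\ge0$ the Laplace representation
\[
(\lambda I-A)^{-1}=\int_0^\infty e^{-\lambda t}e^{At}\,dt .
\]
It converges absolutely because $\|e^{At}\|$ decays exponentially and $|e^{-\lambda t}|\le1$. Metzler matrices generate positive semigroups, so $e^{At}\ge0$ entrywise. Hence entrywise
\[
\bigl|\bigl[(\lambda I-A)^{-1}\bigr]_{8,3}\bigr|\le\int_0^\infty \bigl(e^{At}\bigr)_{8,3}\,dt=\bigl(-A^{-1}\bigr)_{8,3}.
\]
The right-hand side is the value at $\lambda=0$ and is nonnegative (this is the $M$-matrix property: $-A^{-1}\ge0$). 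So the supremum of $|g|$ over the closed right half-plane is attained at $\lambda=0$ and equals $|\omega|(-A^{-1})_{8,3}$. Substituting this into Theorem~\ref{thm:rankone-detlemma} gives the claim.

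The only genuine point to check is the entrywise domination $|(\lambda I-A)^{-1}|\le(-A^{-1})$ on $\Re\lambda\ge0$, i.e.\ positivity of $e^{At}$ for Metzler $A$. I would justify this by writing $A=-cI+N$ with $c\ge0$ large enough that $N\ge0$. Then $e^{At}=e^{-ct}e^{Nt}\ge0$, because $e^{Nt}$ has a power series with nonnegative terms. An alternative, avoiding integrals, is the Neumann series $(\lambda I-A)^{-1}=\sum_k N^k/(\lambda+c)^{k+1}$. It converges for $\Re\lambda\ge0$ since $\rho(N)<c\le|\lambda+c|$, using the Perron--Frobenius fact that $\rho(N)-c=\alpha(A)<0$. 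Each term is bounded entrywise by $N^k/c^{k+1}$, which gives the bound directly. The paper assumes that $A$ is Metzler and Hurwitz at the equilibrium considered, so no further model-specific verification is needed. Removing the $(3,8)$ entry keeps the Metzler property only where the remaining off-diagonal entries are nonnegative (e.g.\ $-\beta x_1$ and $-U\beta_w$ are not), which is why Metzlerity is taken as a hypothesis rather than proved.
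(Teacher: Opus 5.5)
Your proposal is correct and follows the paper's proof. Like the paper, you apply Theorem~\ref{thm:rankone-detlemma} with $\kappa=\omega$, $u=e_3$, $v=e_8$, and combine the Laplace representation of the resolvent with entrywise positivity of $e^{At}$ for Metzler $A$, so that the supremum over $\Re\lambda\ge0$ is the DC gain $(-A^{-1})_{8,3}$. Your version is slightly more careful than the paper's: you bound $|e^{-\lambda t}|\le 1$ rather than asserting $(\lambda I-A)^{-1}\ge0$ and $0\le h(\lambda)$ for complex $\lambda$ (which only makes sense for real $\lambda\ge0$), and you justify the positivity of $e^{At}$ explicitly.
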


\Prf\ Define, for $\Re\lambda\ge0$,
\[
h(\lambda):= e_8^\top(\lambda I-A)^{-1}e_3 .
\]

By Theorem \ref{thm:rankone-detlemma}
\begin{equation}\label{eq:OSN-smallgain}
|\omega|\, \sup_{\Re\lambda\ge0}|h(\lambda)|<1,
\end{equation}
implies that $J$ is Hurwitz and the equilibrium is linearly asymptotically stable.

Note
$(\lambda I-A)^{-1}\ge0$ entrywise for all
$\Re\lambda\ge0$, since $A$ is also Metzler.
Using the Laplace representation
\[
(\lambda I-A)^{-1}=\int_0^\infty e^{-\lambda t}e^{At}\,dt,
\qquad \Re\lambda>\alpha(A),
\]
with $\alpha(A)<0$, one obtains for all $\Re\lambda\ge0$,
\[
0\le h(\lambda)
\le \int_0^\infty [e^{At}]_{8,3}\,dt
= -\bigl(A^{-1}\bigr)_{8,3}.
\]
Consequently,
\begin{equation}\label{eq:OSN-resolvent-bound}
\sup_{\Re\lambda\ge0}
\bigl|e_8^\top(\lambda I-A)^{-1}e_3\bigr|
=h(0)= -\bigl(A^{-1}\bigr)_{8,3},
\end{equation}

\beR[DC-gain collapse]
\label{rem:DC-gain}
Because $A$ is Metzler and Hurwitz, $e^{At}\ge0$ entrywise for all $t\ge0$, so
$h(\lambda)=e_8^\top(\lambda I-A)^{-1}e_3$ is nonnegative and decreasing in
$\Re\lambda$.  Consequently the worst case over the right half-plane is attained
at $\lambda=0$, giving the \emph{DC-gain formula}
\[
\sup_{\Re\lambda\ge0}\bigl|e_8^\top(\lambda I-A)^{-1}e_3\bigr|
= h(0) = -\bigl(A^{-1}\bigr)_{8,3}.
\]
The Hurwitzness test~\eqref{eq:OSN-explicit} therefore reduces to the single
scalar inequality
\[
\omega\,\bigl(-A^{-1}\bigr)_{8,3} < 1,
\]
involving one entry of $A^{-1}$, which is computable in closed form whenever $A$ is
explicit (e.g.\ at gOSN or RFE).
\eeR

\beR[Evaluation at gOSN]
\label{rem:gOSN-eval}
At gOSN one has $S_j=B_j=R=W=0$, $\hat x_1=\mu_n/\beta$,
$\hat U=\mu(R_0-1)/\beta$, and the open-loop matrix $A=J|_{\mathrm{gOSN}}-\omega e_3e_8^\top$
coincides with the full Jacobian at gOSN (since $\omega$ only enters via the removed $(3,8)$
entry, and $R=W=0$ there independently of $\omega$).  Under the gOSN stability conditions
($R_0^W<1$, $R_{\mathrm{gOSN}}<1$), this matrix is Hurwitz (Section~\ref{s:LAS}).
The DC-gain bound~\eqref{eq:OSN-explicit} then provides an explicit upper bound on the
admissible feedback rate:
\[
\omega < \frac{1}{(-A^{-1})_{8,3}}\bigg|_{\mathrm{gOSN}},
\]
a condition that is checkable in closed form from the gOSN coordinates.
An analogous bound holds at RFE, using $\tilde x_1,\tilde U,\tilde W$ from~\eqref{eq:x1}.
\eeR

\begin{remark}[On alternative open--loop splittings]\label{rem:alt-splitting}
One may alternatively define an open loop by deleting other coupling terms (e.g.\
$\mathrm{f1}_x,\mathrm{f2}_x,\mathrm{f1}_B,\mathrm{f2}_B$), as in the approach suggested
in \cite{HA}. The determinant--lemma identity \eqref{eq:detlemma} remains valid
for any such low--rank/sparse splitting. The usefulness of a given splitting depends on
whether the corresponding open--loop matrix $A$ is Hurwitz (or at least has a spectral
structure that can be controlled) at the equilibrium under consideration.
\end{remark}

\section{Generalization of the NGM theorem: Boundary-face invariance forces $J_{x y}=0$ for polynomial mass-action fields}\label{s:NGM}
{%\Large
\beT [NGM theorem for forward invariant faces/siphons]\label{t:NGM}

(generalizing \cite{Diek,Van,Van08},\cite{JA})
Let $(x,y)\in\mathbb{R}^{m}_{\ge 0}\times\mathbb{R}^{n}_{\ge 0}$ and let
\[
\dot x = f_x(x,y),\qquad \dot y = f_y(x,y)
\]
be a polynomial positive (equivalently, mass-action representable) vector field.

Assume that the face $F_x:=\{x=0\}$ is {\bf invariant}, i.e.
\begin{equation}\label{eq:faceinv}
f_x(0,y)\equiv 0\quad\text{for all }y\in\mathbb{R}^n_{\ge 0}
\end{equation}
 (equivalently, $x$ is  a siphon).
Then:\BEN \im The mixed Jacobian block $J_{xy}:=D_y f_x(0,y)$ vanishes identically on the face $F_x$:
\[
D_y f_x(0,y)=0\quad\text{for all }y\in\mathbb{R}^n_{\ge 0},
\]
and at any point $(0,y^\ast)$ on the face, the Jacobian has block lower-triangular form
\[
J(0,y^\ast)=
\begin{pmatrix}
J_x & 0\\
* & J_y
\end{pmatrix}.
\]
\im A fixed point on the face $F_x$ is stable iff the Jacobians $J_x ,
 J_y$ at that point are both stable.
 \im If furthermore $J_x=F-V$, where $(F,V)$ is a \regS\ (\ie\ $F,V^{-1}$ are \nne),
 then a fixed point on the face $F_x$ is stable if
 $\rho(F V^{-1})<1$, and unstable if
 $\rho(F V^{-1})>1$ \cite{Varga}.

\EEN
\eeT

\begin{proof}
Fix $i\in\{1,\dots,m\}$ and write the $i$th component of $f_x$ as a polynomial
\[
(f_x)_i(x,y)=\sum_{\alpha\in\mathbb{N}^{m},\;\beta\in\mathbb{N}^{n}}
c_{\alpha,\beta}\,x^\alpha y^\beta,
\qquad
x^\alpha:=\prod_{p=1}^{m} x_p^{\alpha_p},\quad
y^\beta:=\prod_{q=1}^{n} y_q^{\beta_q},
\]
with real coefficients $c_{\alpha,\beta}$ and only finitely many nonzero terms.
Evaluating at $x=0$ yields
\[
(f_x)_i(0,y)=\sum_{\beta\in\mathbb{N}^{n}} c_{0,\beta}\,y^\beta,
\]
because every monomial with $\alpha\neq 0$ vanishes when $x=0$.

By the invariance hypothesis \eqref{eq:faceinv}, $(f_x)_i(0,y)\equiv 0$ as a polynomial in $y$ on the set
$\mathbb{R}^n_{\ge 0}$. Since a real polynomial that vanishes on a set with nonempty interior must be the zero
polynomial, it follows that $c_{0,\beta}=0$ for all $\beta$. Hence every nonzero monomial in $(f_x)_i$ has
$\alpha\neq 0$, i.e. it contains at least one factor $x_p$. Equivalently, there exists a polynomial $h_i(x,y)$ such that
\[
(f_x)_i(x,y)=\sum_{p=1}^{m} x_p\,h_{i,p}(x,y),
\]
and in particular $(f_x)_i(x,y)$ is divisible by each $x_p$ that appears in every monomial.

Now fix $j\in\{1,\dots,n\}$. Differentiating termwise gives
\[
\frac{\partial (f_x)_i}{\partial y_j}(x,y)=\sum_{\alpha,\beta} c_{\alpha,\beta}\,\beta_j\,x^\alpha y^{\beta-e_j},
\]
where $e_j$ is the $j$th standard basis vector of $\mathbb{N}^n$ and terms with $\beta_j=0$ contribute $0$.
Since every term in the sum has $\alpha\neq 0$, each monomial still contains a factor $x_p$, hence
\[
\frac{\partial (f_x)_i}{\partial y_j}(0,y)=0\qquad \text{for all }y.
\]
As $i$ and $j$ were arbitrary, this proves $D_y f_x(0,y)=0$.

Finally, the Jacobian at $(0,y^\ast)$ splits into blocks
\[
J(0,y^\ast)=
\begin{pmatrix}
D_x f_x(0,y^\ast) & D_y f_x(0,y^\ast)\\
D_x f_y(0,y^\ast) & D_y f_y(0,y^\ast)
\end{pmatrix}
=
\begin{pmatrix}
J_x & 0\\
* & J_y
\end{pmatrix},
\]
which is block lower-triangular, as claimed.
\end{proof}

\beR The difference \wrt\ the classic theorem is that we start by assuming a class of models (positive ODEs), and the essential property  needed \eqr{eq:faceinv} (forward invariance of the face investigated) in order to get the result.
\eeR

\beD [ME siphon-faces]
We will call faces where the Jacobian $J_x$  admits a regular splitting
ME siphon-faces.
\eeD

Since  the DFE-face (we recall that the DFE-face is the intersection of all the invariant faces) is a ME siphon-face for all the ME models we know, we ask:
\beO [is the DFE  face always a ME siphon-face?]
Prove or disprove that the Jacobian $J_x$ admits always a \regS\ on the DFE face.
\eeO
\section{A crucial CRN result: boundary \wli\ points always reside on critical siphons}\label{s:wl}

 Boundary attractors are indispensable in ecology, ME, and the other fields enumerated above, but it is only in CRNT that two remarkable facts about  omega-limits and forward-invariant subspaces
 have drawn sufficient attention. The first is that forward-invariant subspaces  enjoy also
 a combinatorial property known in  the Petri networks literature as being {\emph siphons/semilocking sets}
   (see  \cite{koch2010modeling} for a survey of the Petri net
approach in systems biology, and for efficient algorithms for computing them).

\beD[siphon/semilocking set, locking set]\lbl{d:sip}\cite{feinberg1987, AdLS}.
 \leavevmode

 \begin{itemize}
\item A \textbf{siphon/semilocking set} $W \subseteq \mathcal{S}$ is a nonempty subset of species such that whenever a species in $W$ appears in a product complex, at least one species in $W$ must appear in the corresponding reactant complex.

\item A \textbf{locking set} $W \subseteq \mathcal{S}$ is a nonempty subset where every reaction has at least one species from $W$ in its reactant complex.

    \im A siphon/semilocking set is \textbf{minimal}  when it contains no other siphon included within.
\end{itemize}

\eeD

\beT [siphons correspond to forward invariant faces for positive ODEs]\label{p:07} \cite[Prop. 2,Prop. 4.5]{AdLS},\cite[Prop 2.1]{ShiuStu},%\cite{AndGAS},
A set of variables $Z\subset \mathcal{S}$ in a positive ODE (where $\mathcal{S}$ is the set of all variables) is a siphon, in the sense of Petri nets literature, if and only if the associated boundary face
$\textrm{cl}(F_Z)$ where all variables in $Z$ are $0$ is forward invariant for the ODE.
\eeT

This shows that the non-structural concept of forward-invariant boundary faces (\ie\ dependent on the dynamics, apparently), is in fact   structural (\ie\ it may be defined using only the rates independent part of its CRN definition, the ``reactions network").

%Further results for mass action systems were provided by Shiu and Sturmfels \cite[Lem. 2.3-2.4]{ShiuStu}.

The  semi-locking  property is  fundamental for understanding persistence.
\beD[persistence]\label{d:per} For a mass--action system  on $\mathbb{R}^n_{\ge 0}$, \emph{persistence} means trajectories starting in the positive orthant remain bounded away from the boundary: if $x_i(0)>0$ for all $i$, then $\liminf_{t\to\infty}x_i(t)>0$ for all $i$ \cite{AdLS}.
\eeD

 %\cite{AdLS} showed that if the $\omega$-limit set does not intersect semi-locking boundaries except at equilibria, and if all trajectories starting on non-semilocking boundaries eventually leave those boundaries,  then the system exhibits persistence.

 \beT [Boundary omega-limit points always reside on critical siphons]\cite[Prop. 1]{AdLS},\cite[Lem. 63]{FreFelW}%\cite{AndGAS},
Consider a reaction network such that
\BI \im (r1) the rates are nonnegative, differentiable, \im  (r2) the rates are $0$ whenever some reactant (these are the variables of each rate) is $0$, and
\im (r3) the rates are strictly increasing when the reactants are positive.
\EI
 Then, for any boundary \wli\ point $E \in \omega(\xi), \xi \in \mathcal{O}^+$ letting $Z(E)$ denote the  zero coordinate set of $E$, it holds that
 $Z(E)$ is a siphon.
\eeT

\beR In other words, any boundary $\omega$-limit point $E$ must lie on a  siphon face, or the  zero coordinate set $Z(E)$  must be a siphon. Note however
that the siphon concept is structural, while that of $\omega$-limit point isn't, being seemingly parameter dependent.  However, this result shows that $\omega$-limit points, just like invariant faces, are a structural concept.
\eeR

\beR
Later, \cite[Lem. 63]{FreFelW} reproved this amazing result with a weaker (r3)
``the flow  is forward-complete; in other words, for any initial state, the (unique) maximal solution of the corresponding initial value problem in is defined for all $t \geq 0$".

\eeR

We can think informally of siphons as of ``shelters for boundary equilibria", and the above result suggests   preceding  the identification of all \bfp s  by the identification of all siphons,  the latter being an easier problem, which does not depend on the precise nature of its rates (polynomial, algebraic, or non-algebraic), as long as they \saty\ the conditions of  \cite{AdLS} generalized in \cite{FreFelW}. These conditions  are satisfied under the most common kinetic assumptions in the literature, namely, mass-action,  power-law kinetics, Michaelis-Menten kinetics, or Hill kinetics, as well as combinations of these \cite[pg 585--586]{AdLS10}.

\beR \label{r:rec} Another important tool for positive ODEs worth highlighting comes from the generalized Lotka-Volterra theory in ecology, which may be viewed as focusing on the particular case in which each species is a siphon, and  whose equilibria \saty\ a polynomial complementarity problem \cite{Cui2025Comp}.

Solving Lotka-Volterra problems could possibly be integrated in symbolic and numeric packages for the solution of polynomial systems, since it enjoys a recursive structure (for each variable, either is is 0, and we are left with a system of  dimension smaller by one,  or this variable can be simplified) but this does not seem to have happened yet.

\eeR
\beR \label{r:lat}A moment of thought reveals that a similar recursion holds place for systems whose siphons are precisely the lattice generated by minimal siphons.
\eeR

As of recently,  by generalizing {pairwise} interactions  to group ones,  classic tools like graphs, adjacency matrices  and Metzler matrices (whose off-diagonal elements are all non-negative, and  are commonly associated with cooperative systems, such as generalized Lotka-Volterra, and single-virus systems \cite{liu2020stability,cui2022discrete,pare2018analysis}) have been extended  to \emph{hypergraphs}, \emph{adjacency tensors} and \emph{Metzler tensors} \cite{gallo1993directed,chang2013survey,chang2008perron,
yang2010further,yang2011further,bick2023higher}. Pioneering works like
\cite{Letten,iacopini2019simplicial,cisneros2021multigroup,cui2023general,
cui2025metzler,cui2025analysis} have initiated the study of  positive ODES
modeling group interactions in ecology and epidemiology. The approach of these authors
 suggests that, despite their different focuses, a unification of BIN subfields is profitable.

\section*{Declarations}

\textbf{Complience of Ethical Standard:} The authors have no competing interests to declare that are relevant to the content of this article. The paper is of a theoretical nature so 
no experiments were performed and no human or animal subjects were involved.

\iffalse
\section{Further review of  useful facts about positive/essentially non-negative ODEs}
The \BIN\  disciplines  are all concerned with positive  dynamical systems, but neglect sometimes important results in their  ``sister sciences", and one of our motivations here is to increase the awareness to general important results which have been cited sometimes thousands of times in one field, and almost never in another.  To emphasize their importance, we will call them in the next subsection laws.
\fi

\bibliographystyle{apalike}%{amsalpha}

%\bibliography{biblio,biblio_Arino_Julien,epidemio,math_epi,math,ref}
 
\end{document}